\definecolor{myblue}{rgb}{0.09,0.32,0.44} %22-84-113
\def\afs#1#2{\href{#1}{\nolinkurl{#2}}}
\def\afs#1#2{\burlalt{#1}{#2}}
\theoremstyle{plain}
\newtheorem{theorem}{Theorem}
\newtheorem{proposition}[theorem]{Proposition}
\newtheorem{lemma}[theorem]{Lemma}%[section]
\newtheorem{conjecture}{Conjecture}
\theoremstyle{remark} 
\newtheorem*{remark}{Remark}
\theoremstyle{definition} 
\newtheorem*{definition}{Definition}
\newcommand{\eps}{\varepsilon}
\newcommand{\ep}{\varepsilon}
\newcommand{\bbP}{\mathbb P}
\newcommand{\Z}{\mathbb{Z}}
\newcommand{\N}{\mathbb{N}}
\newcommand{\PP}{\mathbb{P}}
\newcommand{\Pp}[2][p]{\PP_{#1} \left [ #2 \right ]}
\newcommand{\lr}[1][]{\xleftrightarrow{#1}}
\newcommand{\nlr}[1][]{\overset{#1}{\nleftrightarrow}}
\DeclareMathOperator{\slab}{Slab}
\DeclareMathOperator{\oslab}{OSlab}
\title{Upper bounds on the percolation correlation length} \author{Hugo Duminil-Copin\footnotemark[1]\footnote{Universit\'e de Gen\`eve} \footnotemark[2]\footnote{Institut des Hautes \'Etudes Scientifiques}  , Gady Kozma\footnotemark[3]\footnote{Weizmann Institute} , Vincent Tassion\footnotemark[4]\footnote{ETH Zurich}}
\date{}
\begin{document}
% ***************CORPS DU DOCUMENT***********
\maketitle
\bigbreak
\bigbreak
\noindent
\begin{center}
  {\em In memory of our dear friend Vladas Sidoravicius 1963--2019}
\end{center}
\bigbreak
\bigbreak
\begin{abstract}
We study the size of the near-critical window for Bernoulli percolation on $\Z^d$. More precisely, we use a quantitative Grimmett-Marstrand theorem to prove that the correlation length, both below and above criticality, is bounded from above by $\exp(C/|p-p_c|^2)$. Improving on this bound would be a further step towards the conjecture that there is no infinite cluster at criticality on $\Z^d$ for every $d\ge2$.
\end{abstract}

\section{Introduction}

\subsection{Critical percolation}\label{sec:crit}

The main open question in percolation theory is to understand the behaviour at criticality, i.e.~when $p$ is equal to $p_c$, and in particular to prove that there does not exist an infinite cluster at $p_c$ (precise definitions will be given below, in \S\ref{sec:prelim}).
\begin{conjecture}\label{conj:1}
For every $d\ge2$, $\mathbb P_{p_c}[0\leftrightarrow\infty]=0$.
\end{conjecture}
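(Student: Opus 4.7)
The conjecture is known in $d=2$ via planar duality and Kesten's theorem, and in high dimensions ($d\ge 11$) via the lace expansion of Hara and Slade; the essential remaining content is the intermediate dimensions $3 \le d \le 10$. Following the title of this paper, I would attack the problem through the correlation length $L(p)$. The plan is to upgrade the bound $L(p) \le \exp(C/|p-p_c|^2)$ established here to a polynomial bound $L(p) \le C|p-p_c|^{-\alpha}$, and then invoke a finite-size scaling criterion in the spirit of Borgs, Chayes, Kesten and Spencer to deduce $\Pp[p_c]{0 \lr \infty} = 0$.

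The first step is a bootstrap of the quantitative Grimmett-Marstrand renormalization. The slab construction currently treats all sub-scales uniformly; the idea is to plug the input bound on $L(p)$ back into the multi-scale analysis, so that scales well below $L(p)$ enjoy essentially critical crossing estimates and losses accumulate only at the scale $L(p)$ itself. Iterated, this should drive the exponent $2$ in the double bound down, first to some $2-\delta$, then to arbitrary positive exponents, yielding ultimately a stretched-exponential upper bound on $L$.

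The second step is to upgrade such a stretched-exponential bound to a polynomial one using the differential inequalities of Aizenman-Barsky-Menshikov and Duminil-Copin-Tassion. These give $\theta(p)\ge c(p-p_c)_+$ and a quantitative control of the susceptibility. Heuristically, a box of side $L(p)$ looks essentially critical, so the number of disjoint long connections it supports must scale polynomially in $L(p)$; matching this with the lower bound on $\theta(p)$ pins down the exponent. A finite-size scaling criterion then finishes the argument.

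The main obstacle is the bootstrap in the first step. Each iteration of Grimmett-Marstrand only improves the prefactor by a multiplicative constant, not the exponent in $|p-p_c|^{-2}$, so no purely soft iteration of the present argument can reach the polynomial regime. A genuinely new quantitative ingredient is required — most plausibly a quantitative version of the random-current sharpness argument, or a triangle-type condition adapted to work below the upper critical dimension. Producing such an ingredient is the crux of Conjecture~\ref{conj:1} and the reason it remains open.
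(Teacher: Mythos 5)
There is no proof to compare here, on either side: the statement you were given is Conjecture~\ref{conj:1}, which the paper explicitly leaves open (it is resolved only for $d=2$ and $d\ge 11$ elsewhere in the literature), and your text is a research programme rather than a proof --- as you yourself concede in the final paragraph, the ``crux'' ingredient is missing. So the gap is not a step that could be patched; it is the entire content of the statement. To be concrete about why your two steps do not close it: the bootstrap in Step 1 has no mechanism behind it. In the paper's argument the exponent $2$ in $\xi_p\le\exp(C|p-p_c|^{-2})$ comes from the sharp-threshold step (Kahn--Kalai--Linial/Talagrand): the logarithmic derivative of $\mathbb P_p[\Lambda_{n^\beta}\leftrightarrow\partial\Lambda_n]$ is only bounded below by $c\log n$, because the pivotality bound of Lemma~\ref{lem:pivotal} is polynomial in $n$; to push a crossing probability of order $\eps$ up to $1-e^{-1/\eps}$, as the renormalisation of Theorem~\ref{thm:quantitative GM} requires, one is forced to take $\log n\gtrsim \eps^{-2}$. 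Feeding a better a priori bound on $\xi_p$ back into the scheme changes neither the influence bound nor the $e^{-1/\eps}$ requirement, so iteration improves constants, never the exponent --- exactly the obstruction the paper points to when it says the use of \cite{KKL88} is the main constraining factor.

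Step 2 is likewise not a deduction. The finite-size scaling machinery of Borgs--Chayes--Kesten--Spencer \cite{BorChaKes99} is conditional on hyperscaling-type hypotheses (uniform boundedness of crossing probabilities at criticality), which are themselves unknown precisely in the dimensions $3\le d\le 5$ you are targeting; the unconditional versions of Kesten's scaling relations are two-dimensional and rest on Russo--Seymour--Welsh theory. Moreover, even a polynomial bound $\xi_p\le|p-p_c|^{-C}$ is not claimed by the paper to imply the conjecture: the route to Conjecture~\ref{conj:1} that the paper actually identifies is different, namely removing the sprinkling $\eps$ from the Grimmett--Marstrand implication \eqref{eq:P2ep} (equivalently, exhibiting events $\mathcal E_n$ satisfying both directions of \eqref{eq:star} at the same parameter), which is again an open problem requiring new control of the critical regime --- for instance of uniqueness of large clusters beyond Proposition~\ref{prop:uniqueness}. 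So your proposal correctly locates where new ideas are needed, but it does not constitute a proof, and it should not be presented as one.
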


This conjecture has been solved for $d=2$ \cite{Har60,Kes80} and for $d\ge11$ \cite{FitHof15} based on ideas pioneered in  \cite{BrySpe85,HarSla90}. An important result related to the techniques of our paper is the fact that there is no percolation on a half space \cite{BarGriNew91}. Further, the result is also known for graphs of the form $\mathbb Z^2\times G$ with $G$ finite; see \cite{DamNewSid15,DumSidTas14}. On transitive graphs with rapid growth, additional tools are available, and the following cases are known: non-amenable graphs \cite{BenLyoPer99}, graphs with exponential growth \cite{Hut16}, and recently some graphs with stretched-exponential growth \cite{HutHer18}.

A natural scheme to attack the conjecture on $\Z^d$ is to find a $\delta>0$ and a sequence of events $\mathcal E_n$ depending on edges in the box $\Lambda_n\coloneqq\{-n,\dotsc,n\}^d$ only, such that {\em for any $p$},
\begin{equation}\tag{$\star$}\label{eq:star} \exists n>0\text{ s.t.~}\mathbb P_p[\mathcal E_n]>1-\delta\quad\Longleftrightarrow\quad\mathbb P_p[0\leftrightarrow\infty]>0.\end{equation}
If such a sequence exists, the set of $p$ such that $\mathbb P_p[0\leftrightarrow\infty]>0$ is an open set since it is the union of the open sets (indexed by $n$) $\{p:\mathbb P_p[\mathcal E_n]>1-\delta\}$ (this set is open since $p\mapsto \mathbb P_p[\mathcal E_n]$ is continuous). 

Of course, this strategy is tempting, but the main difficulty is that the $\Longrightarrow$ and the $\Longleftarrow$ implications involved in \eqref{eq:star} are difficult to prove simultaneously. One may for instance easily check the $\Longrightarrow$ implication by asking a lot on $\mathcal E_n$, but then the $\Longleftarrow$ one becomes difficult, and vice-versa. To illustrate this trade-off phenomenon, let us give a few examples of possible sequences $(\mathcal E_n)$, going from the strongest criterion (meaning the one for which the $\Longrightarrow$ implication is the easiest to prove) to the weakest one (meaning the one for which $\Longrightarrow$ is the hardest).
\medbreak\noindent
{\em Example 1.} Let $\mathcal E_n$ be the event that $\Lambda_{n/10}$ is connected to $\partial\Lambda_n\coloneqq\Lambda_n\setminus\Lambda_{n-1}$ and that the second largest cluster in $\Lambda_n$ has radius smaller than $n/10$. In this case, a coarse-graining argument similar to \cite{AntPis96} implies the $\Longrightarrow$ implication easily. Proving $\Longleftarrow$ is still open in particular because of the difficulty to exclude the existence of many large clusters avoiding each other.
\medbreak\noindent
{\em Example 2.} Let $\mathcal E_n$ be the intersection of the events that 
$(\pm n,0)+\Lambda_{n/2}$ are connected in $\Lambda_{2n}$ and that there exists at most one cluster in $\Lambda_{2n}$ going from $(\pm n,0)+\Lambda_{n/2}$ to $(\pm n,0)+\partial\Lambda_{n}$.
A coarse-graining argument may be used to prove $\Longrightarrow$ but $\Longleftarrow$ remains open due to the same reason as the previous condition. 
\medbreak
In general, uniqueness of clusters going from one area to another one is a key difficulty in these problems. This might be related to the fact that in high dimensions $\Lambda_n$ indeed hosts many disjoint clusters in $p_c$, see \cite{A97}. In order to circumvent this difficulty, one can make different choices for $\mathcal E_n$.  \medbreak\noindent
{\em Example 3.} Let $\mathcal E_n$ be the same event as in the second example, but with $(\pm n,0)+\Lambda_{n/2}$ replaced by $(\pm n,0)+\Lambda_{u_n}$, with $u_n$ much smaller than $n/2$. In this case, the implication $\Longrightarrow$ is as before, and does not depend on $u_n$. As for the implication $\Longleftarrow$, as $u_n$ becomes smaller the connectivity part becomes harder and the uniqueness part becomes easier. %become. This is, of course, assuming that the main difficulty in proving $\Longleftarrow$ lies in the requirement that there is at most one cluster between $(\pm n,0)+\Lambda_{u_n}$ and $(\pm n,0)+\partial\Lambda_{n}$, but this assumption seems to be verified in the existing results in that direction.
\medbreak
 Recently, a paper of Cerf \cite{MR3395466}, based on \cite{AizKesNew87}, provided a beautiful insight on how big $u_n$ must be taken to have that with large probability, $\Lambda_n\coloneqq\{-n,\dotsc,n\}^d$, the box of size $n$, contains at most one cluster going from $\Lambda_{u_n}$ to $\partial\Lambda_n$. We will come back to this later in the introduction, but let us mention the result right now.
 
 \newcommand{\piv}{A_2} 
 Given $1\le m \le n$, consider the set of clusters in the configuration restricted to the box $\Lambda_n$, and define $A_2(m,n)$ to be  the event that there exists at least two disjoint such clusters intersecting both $\Lambda_m$ and $\partial\Lambda_n$. %Note that $A_2(n_i,n_{i+1})$ corresponds to  the event used in the condition {\bf (c)} of Theorem~\ref{thm:quantitative GM}.
\begin{proposition}[Cerf]\label{prop:uniqueness} Let $d\ge2$. There exists
  $\alpha=\alpha(d)\in (0,1)$  such that for any $p\in[0,1]$ and $n$ large enough,
  \begin{equation}
 \Pp{A_2(n^{\alpha},n)}\le\frac1{n^\alpha}.\label{eq:1}
\end{equation}
\end{proposition}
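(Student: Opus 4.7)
The plan is to follow the Aizenman--Kesten--Newman uniqueness strategy, extracting a quantitative and $p$-uniform estimate. I would organize the argument into two complementary ingredients, roughly corresponding to the subcritical and the supercritical regimes, and then patch them together.

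For the first ingredient, observe that on $A_2(m,n)$ there exist two edge-disjoint self-avoiding paths from $\Lambda_m$ to $\partial\Lambda_n$ lying in distinct clusters of the configuration restricted to $\Lambda_n$. The van den Berg--Kesten inequality then yields
\[
\mathbb{P}_p[A_2(m,n)] \;\le\; \sum_{x,y \in \Lambda_m} \mathbb{P}_p[x \leftrightarrow \partial \Lambda_n]\,\mathbb{P}_p[y \leftrightarrow \partial \Lambda_n] \;\le\; |\Lambda_m|^{2}\,\pi_n(p)^2,
\]
where $\pi_n(p) := \max_{x} \mathbb{P}_p[x \leftrightarrow \partial \Lambda_n]$. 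When $\pi_n(p)$ is small --- the typical situation below criticality --- this directly gives a bound of order $n^{2d\alpha}\pi_n(p)^{2}$ for $m=n^{\alpha}$, which is at most $n^{-\alpha}$ once $\alpha$ is chosen small.

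For the second ingredient, when $\pi_n(p)$ is \emph{not} small, I would invoke a finite-energy merging argument in the spirit of Aizenman--Kesten--Newman. If two disjoint clusters cross an annulus $\Lambda_{2R}\setminus\Lambda_R$, then flipping $O(1)$ well-chosen edges merges them, and the ratio between the two configurations is bounded below by a constant $c_0>0$ depending only on $d$. This gives a geometric factor $c<1$ per scale for the event that the two clusters remain disjoint. Iterating across the $\Theta(\log(n/m))$ dyadic annuli between $\Lambda_m$ and $\partial\Lambda_n$ produces polynomial decay $(m/n)^{\beta}$ for some $\beta>0$.

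The main obstacle is to combine these two estimates into a single bound uniform in $p$, without any explicit reference to $p_c$ (whose value is of course unknown). I would handle this by thresholding on the size of $\pi_n(p)$: in the regime $\pi_n(p)\le n^{-\gamma}$ the van den Berg--Kesten bound suffices, while in the complementary regime the density of occupied connections is large enough to make the finite-energy merging of the second ingredient effective, with an effective $c$ depending only on $d$. Choosing $\alpha>0$ sufficiently small (depending only on $d$ and on the $\gamma$ separating the two regimes) should then give $\mathbb{P}_p[A_2(n^{\alpha},n)]\le n^{-\alpha}$ for every $p\in[0,1]$ and all $n$ large enough.
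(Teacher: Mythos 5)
Your first ingredient is correct but it is the easy half, and it is close in spirit to what the paper does (the paper does not even need BK: it just bounds $\mathbb P_p[A_2(n,n^C)]$ by the crossing probability $\mathbb P_p[\Lambda_n\leftrightarrow\partial\Lambda_{2n}]$ and is done whenever that is at most $1/n$). The genuine gap is in your second ingredient, which is exactly where the difficulty of the proposition lives. The claim that two disjoint clusters crossing an annulus $\Lambda_{2R}\setminus\Lambda_R$ can be merged by flipping $O(1)$ well-chosen edges is false: the two crossing clusters may stay at mutual distance of order $R$ (say, one near one face of the annulus and one near the opposite face), so any merging surgery must open a path of length comparable to $R$, whose cost near $p_c$ is not bounded below by any constant. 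Moreover, the proposed ``geometric factor $c<1$ per dyadic scale'' has no independence or Markov structure behind it: disjointness of the two clusters is a property of the configuration in the whole box, not a conjunction of annulus-wise events, and in fact at $p_c$ in high dimensions the probability that an annulus of bounded aspect ratio is crossed by two \emph{distinct} clusters does not even tend to $0$ (there are many spanning clusters, cf.\ the reference to Aizenman in the introduction), so no single-scale estimate of the type ``conditionally on the past, merge with probability at least $c$'' can be obtained this cheaply. If a local finite-energy argument of the kind you sketch worked, the polynomial two-arm bound would be an elementary exercise, whereas it is the main theorem of Cerf's paper.

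What is actually needed, and what the paper uses, is (i) the quantitative Aizenman--Kesten--Newman two-arm estimate, uniform in $p\in[\delta,1-\delta]$, namely $\mathbb P_p[A_2(0,n)]\le \kappa\log n/\sqrt n$ (the inequality above Proposition~5.3 in Cerf; the values of $p$ near $0$ or $1$ are handled perturbatively), and (ii) a mechanism to pass from this point-to-boundary two-arm event to $A_2(n^{\alpha},n)$, which in Cerf's argument rests on a polynomial lower bound for point-to-point connection probabilities in the box (Lemma~6.1 and Corollary~7.3 there). At $p=p_c$ Cerf extracts that lower bound from criticality; to make it uniform in $p$ the paper uses the dichotomy: either $\mathbb P_p[\Lambda_n\leftrightarrow\partial\Lambda_{2n}]\le 1/n$, and the bound on $A_2(n,n^C)$ is immediate, or this crossing probability exceeds $1/n$, in which case a union bound and translation invariance give $\mathbb P_p[0\leftrightarrow\partial\Lambda_m]\ge 1/(n|\partial\Lambda_n|)$ for all $m\le n$, hence $\mathbb P_p[x\leftrightarrow y \text{ in }\Lambda_{2n}]\ge n^{-C}$ for $x,y\in\Lambda_n$, and Cerf's machinery applies. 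Your thresholding on $\pi_n(p)$ plays the role of this dichotomy, but the critical/supercritical branch cannot be closed by local surgery; it requires the AKN/Cerf input (or a full quantitative reworking of it), which your sketch does not supply.
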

We fill some details on this proposition in \S\ref{sec:Cerf}.
Let us finish by a last example, which is very simple but interesting for the discussion that follows.
%\item Let $\mathcal E_n$ be the event that the box $\Lambda_N$ is connected to a translate of $\Lambda_N$ composed of open edges only and intersecting $\{n\}\times\{-n,\dots,n\}^{d-1}$, with $N=N(\delta)>0$. In this case, {\bf P2} is fairly easy to obtain, yet {\bf P1} seems out of reach of today's techniques.
\medbreak\noindent
{\em Example 4.} Let $\mathcal E_n$ be the event that the box $\Lambda_N$ is
  connected to $\{n\}\times\{-n,\dots,n\}^{d-1}$, with $N=N(\delta)>0$ independent of $n$. Here,
$\Longleftarrow$ follows easily from the ergodicity of $\mathbb P_p$ and FKG but again the $\Longrightarrow$ implication 
seems difficult to obtain.  
\medbreak
The search for  a good sequence of events $\mathcal E_n$ has been at the heart of attempts to prove the conjecture. An important development was made in \cite{GriMar90}. In this paper, the authors considered the sequence of events $\mathcal
E_n$ defined in the fourth example. As mentioned above, the $\Longrightarrow$ seems extremely difficult to derive. Nevertheless,
Grimmett and Marstrand introduced a clever renormalisation scheme allowing to
prove the following weaker version of the implication: for any $\ep>0$, there exists
$\delta>0$ such that for every $n$,
\begin{equation} \label{eq:P2ep}
\mathbb P_p[\mathcal E_n]>1-\delta\quad\Longrightarrow\quad\mathbb P_{p+\ep}[0\lr[{\rm Slab}^d_n\,]\infty]>0
\end{equation}
where ${\rm Slab}_n^d\coloneqq\Z^2\times\{-n,\dotsc,n\}^{d-2}$. In words, the implication can be proved if one allows some {\rm sprinkling}. As suggested in \cite{GriMar90}, if one could get rid of the sprinkling by
$\ep$ in the previous statement, then the conjecture would follow.

The goal of this paper is to prove a quantitative version of the Grimmett-Marstrand argument by bounding the critical point of ${\rm Slab}^d_n$ in terms of $n$. In the language of the Grimmett-Marstrand theorem, we will be interested in how small $\ep$ can be taken as a function of $n$. We believe that improving how small $\ep$ can be taken is a good intermediate problem for the conjecture. Getting bounds is non-trivial and requires some understanding of the critical phase. As a consequence, each improvement
on the existing  bounds should shed a new light on the critical behaviour.

There is a quantity which is intimately related to $p_c({\rm Slab}^d_n)$, called the {\em correlation length}, which appears repeatedly in physics. In order to have a statement which is independent of the Grimmett-Marstrand theorem, we choose to first state our main result in terms of the correlation length.

\subsection{An upper bound on the correlation length}

For $p<p_c$, the probability $\mathbb P_p[0\leftrightarrow \partial\Lambda_n]$ decays exponentially fast in $n$ (see \cite{AizBar87,Men86,DumTas15a}). %A sub-multiplicativity argument (see
  %  e.g.~\cite[Section 6.2]{Gri99a}) yields the existence of the
The rate at which this happens is known as the {\em correlation length} $\xi_p$, namely
$$\xi_p:=\varlimsup_{n\rightarrow\infty}-\frac{n}{\log\mathbb P_p[0\leftrightarrow \partial\Lambda_n]}.$$
For $p>p_c$, the correlation length is also defined, but the formula is slightly modified:
 $$\xi_p:=\varlimsup_{n\rightarrow\infty}-\frac{n}{\log\mathbb P_p[0\leftrightarrow \partial\Lambda_n,0\not\leftrightarrow\infty]}.$$
Again, the probability decays exponentially fast so $\xi_p$ is finite. This is due to the following. Grimmett and Marstrand \cite{GriMar90} showed that for any $p>p_c$, there exists $n\ge1$ such that
$$\mathbb P_p\big[0\lr[{\rm Slab}^d_n\,]\infty\big]>0.$$
And the exponential decay follows from that by the results of %This fact implies that $\mathbb
%  P_p[0\leftrightarrow \partial\Lambda_N,0\not\leftrightarrow\infty]$ decays
% exponentially fast in $N$, see
\cite{MR905331}. Let us mention that in fact, both limits exist. We could not find a reference for this fact, but it follows using standard methods, see e.g.~\cite[Section 6.2]{Gri99a} for the subcritical case and \cite{MR1048927} for the supercritical case (both prove the existence of the limit with a different definition of the correlation length, but the proofs work also with our definition and the values are equal).

  Our main result is the following.
\begin{theorem}
\label{thm:characteristic-length}
Let $d\ge3$. There exists $C=C(d)>0$ such that for any $p\ne p_c$, 
$$\xi_p\le \exp(C|p-p_c|^{-2}).$$
\end{theorem}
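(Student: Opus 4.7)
My plan is to prove Theorem \ref{thm:characteristic-length} via the quantitative Grimmett--Marstrand statement
\[
p_c({\rm Slab}^d_n) - p_c \;\le\; \tfrac{C}{\sqrt{\log n}} \qquad (n \text{ large}).
\]
Taking $n = \lceil \exp(C^2/\eps^2) \rceil$ with $\eps = p - p_c > 0$ yields percolation in the slab at parameter $p$; combined with the supercritical exponential-decay result of \cite{MR905331}, this gives $\xi_p \lesssim n \le \exp(C'/\eps^2)$. For the subcritical regime $p = p_c - \eps$, the same inequality is run contrapositively: if $\mathbb P_p[0 \leftrightarrow \partial\Lambda_n]$ were of order $1$ at a scale $n$ much smaller than $\exp(C/\eps^2)$, the sprinkling scheme below would yield slab percolation at some parameter $p' \le p_c$, contradicting the absence of percolation in half-spaces \cite{BarGriNew91} together with sharpness.

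The engine is a one-step sprinkling lemma. Assume $\mathbb P_p[\Lambda_{n^\alpha} \leftrightarrow \partial \Lambda_n] \ge 1/2$. By Proposition \ref{prop:uniqueness}, up to an error $n^{-\alpha}$, any such crossing uses a single giant cluster $\mathcal G \subset \Lambda_n$. Conditioning on $\mathcal G$, I would sprinkle independent Bernoulli$(\eta)$ edges in a corridor disjoint from $\Lambda_n$ and force $\mathcal G$ to merge with its counterpart $\mathcal G'$ in the adjacent translate $\Lambda_n + n e_1$. A pigeonhole argument, fed by Cerf's uniqueness, shows that $\mathcal G$ must have of order $n^{d-1}$ contacts on the interface face with probability bounded away from zero; a second-moment / FKG computation then bounds the probability of failing to merge by $\exp(-c\,\eta^2 n)$. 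Consequently the scale-$2n$ crossing event holds with probability at least $1 - \exp(-c\eta^2 n) - n^{-\alpha}$ at parameter $p + \eta$.

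Iterating this dyadically across scales $n_0, 2n_0, \ldots, 2^K n_0$, one applies at scale $2^k n_0$ a sprinkling of density $\eta_k \asymp \log(k+2)/\sqrt{2^k n_0}$, chosen so that the one-step failure probability is summable in $k$. The total sprinkling budget $\sum_k \eta_k$ is dominated by a geometric series and is therefore $\lesssim n_0^{-1/2}$; requiring this to be at most $\eps$ forces $n_0 \asymp \eps^{-2}$. Allowing $K \asymp \eps^{-2}$ scales then produces percolation in the slab at scale $R = 2^K n_0 \lesssim \exp(C/\eps^2)$, matching the target bound. The main obstacle I anticipate is the geometric step: Proposition \ref{prop:uniqueness} provides uniqueness of the giant but not directly a lower bound on its interface size. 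Extracting the $n^{d-1}$-bound on boundary contacts will require an averaging argument across the $n^{d-1}$ potential entry points on a face, combined with careful decoupling so that the sprinkled edges are independent of those used to produce $\mathcal G$. This is where the polynomial rate $n^{-\alpha}$ in Proposition \ref{prop:uniqueness}, rather than a mere $o(1)$, becomes essential, since the failure probabilities must be summable across scales.
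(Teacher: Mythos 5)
Your outer frame (deduce the theorem from a quantitative bound $p_c({\rm Slab}^d_n)\le p_c+C/\sqrt{\log n}$, handle $p>p_c$ via \cite{MR905331} and $p<p_c$ by noting that slab percolation at a sprinkled parameter forces that parameter above $p_c$) is the same as the paper's, up to two glosses that are repairable: to use \cite[Theorem 5]{MR905331} one needs a quantitative lower bound on percolation in the slab \emph{from its boundary}, which the paper extracts from a two-point function estimate, and the subcritical case cannot simply be "run contrapositively" from an order-one crossing probability — the paper instead runs the entire machinery from $p_n$ (defined by $\xi_{p_n}=n$), which is what the Chayes--Chayes step via $\varphi_p(S)$ makes possible; the appeal to \cite{BarGriNew91} is unnecessary. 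The genuine gap is in your engine, the one-step sprinkling lemma and its dyadic iteration.

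That lemma is unsupported in two distinct ways. First, its hypothesis $\mathbb P_p[\Lambda_{n^\alpha}\leftrightarrow\partial\Lambda_n]\ge 1/2$ at the base scale $n_0\asymp\varepsilon^{-2}$ is not available: for $d\ge 3$ there is no known order-one lower bound on near-critical crossing probabilities at polynomial scales (no RSW theory), and manufacturing such a bound after an $\varepsilon$-sprinkling is exactly the role of the paper's sharp-threshold step (Lemma \ref{lem:pivotal} plus KKL/Talagrand); since the total-influence bound is only logarithmic in $n$, that step only delivers high crossing probability at scale $n\approx\exp(1/\varepsilon^2)$, which is precisely where the $\exp(C|p-p_c|^{-2})$ in the theorem comes from. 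Your accounting, which starts at $n_0\asymp\varepsilon^{-2}$ and spends a geometrically summable sprinkling budget $\sum_k\eta_k\lesssim n_0^{-1/2}$, would, if correct, yield a \emph{polynomial} bound on $\xi_p$ — far beyond what is currently provable, and a sign that the scheme cannot close. Second, the key geometric claim — that Proposition \ref{prop:uniqueness} plus pigeonhole gives order $n^{d-1}$ contacts of the giant with the interface face, hence merging failure probability $\exp(-c\eta^2 n)$ — does not follow: Cerf's two-arm bound gives uniqueness of crossing clusters with polynomial error, but no density or surface-contact estimate for that cluster, and no control on how close $\mathcal G$ and $\mathcal G'$ come to each other in the corridor where you sprinkle. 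The paper avoids needing any such estimate: it upgrades the high-probability crossing (condition \condb, probability $1-e^{-1/\varepsilon}$ from the sharp threshold) to connection of \emph{arbitrary} sets of diameter $n$ to quarter-faces by placing $\sim\varepsilon^{-2}$ disjoint $K$-boxes along the set and gluing via the uniqueness event inside each box (Lemma \ref{lem:bprime}), and then runs a Grimmett--Marstrand exploration with $\lambda\varepsilon$-sprinkling whose cost is controlled by Lemma \ref{sec:lemma-1}, which needs only a failure probability $e^{-c/\varepsilon}$, not $e^{-c\eta^2 n}$. Without a substitute for the sharp-threshold step and for the interface-density claim, your proof does not go through.
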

The results below and above $p_c$ are different in nature (even though the same proof gives both), a point which will become clearer when we discuss the proof in the next section. In particular, the use of \cite{GriMar90} to connect slabs and the correlation length mentioned above is used only for $p>p_c$.

Our bound on $\xi_p$ is far from the truth. Conjecturally, one has
$\xi_p=|p-p_c|^{-\nu+o(1)},$
where $o(1)$ tends to $0$ as $p\rightarrow p_c$ ($p\ne p_c$) and $\nu$ is given by 
$$\nu=\begin{cases}\tfrac43&\text{if $d=2$},\\ 0.87\dots&\text{if $d=3$},\\ 0.69\dots&\text{if $d=4$},\\ 0.56\dots&\text{if $d=5$},\\ \tfrac12&\text{if $d\ge6$}.
\end{cases}$$
Some physics references are \cite{AMAH90,LS18,W}. The predictions for $d=3,4,5$ are numerical, while the prediction for $d=2$ is based on conformal field theory, quantum gravity or Coulomb gas formalism, and the prediction for $d\ge6$ on the fact that the model should have a mean-field behaviour.
For site percolation on the triangular lattice, $\xi_p=|p-p_c|^{-4/3+o(1)}$ was proved in \cite{SmiWer01} using the conformal invariance of the model proved in \cite{Smi01}, the theory of Schramm-L\"owner evolution and scaling relations obtained by Kesten in \cite{Kes87b} (such scaling relations were proved under the hyper-scaling hypothesis \cite{BorChaKes99} which is expected to be valid for $d\le 5$). In fact, Russo-Seymour-Welsh theory \cite{Rus78,SeyWel78} combined with \cite{Kes87b} imply that there exists $C>0$ such that $\xi_p\le |p-p_c|^{-C}$ for Bernoulli bond percolation on $\Z^2$. For $d\ge19$, $\xi_p=|p-p_c|^{-1/2+o(1)}$ was proved in \cite{HarSla90,H90} for $p<p_c$. Let us remark that \emph{lower} polynomial bounds may be achieved. We could not found a proof in the literature for this fact, so we include a proof sketch in \S \ref{sec:lower}.

%% \subsection{Slabs and renormalisation}
%% Let us go back to Example 2 from \S\ref{sec:crit}, i.e.\ to the event $\mathcal E_n$ that $(\pm n,0)+\Lambda_{n/2}$ are connected in $\Lambda_{2n}$ and that {\em this connecting cluster is unique}. In two dimensions, it can be shown that this in fact happens at the correlation length. In high dimensions, though, it is expected that the first $n$ where $\PP_p[\mathcal E_n]>0.9$ (say) is of the order of $\xi_p\log \xi_p$ i.e.\ that there is a logarithmic factor between the correlation length and the ``renormalisation length''.

\subsection{A quantitative Grimmett-Marstrand theorem}

The theory of static renormalisation, developed throughout the eighties \cite{AizDelSou80, MR905331, MR1048927, BarGriNew91, GriMar90}, allows to relate the correlation length, percolation in slabs, and various events of the type discussed in \S\ref{sec:crit}. It is a deep theory and we will not attempt to survey it here. But, as already explained, it motivates us to state a version of our main theorem in terms of slabs.
\begin{definition}
  Throughout the paper we denote by $p_n$ the smallest $p<p_c$ such that $\xi_{p}=n$. 
\end{definition}
\begin{theorem}\label{thm:GM}
Fix $d\ge3$. There exists a constant $C=C(d)>0$ such that for every $n\ge3$, \begin{equation}
\bbP_{p_n+\frac{C}{\sqrt{\log
      n}}}[0\lr[{\rm Slab}^d_n\,]\infty]\ge \frac1{2\sqrt{\log
    n}},\label{eq:2}
\end{equation}
In particular, we have that 
$$p_c({\rm Slab}^d_n\,)<p_c+\frac{C}{\sqrt{\log n}}.$$
\end{theorem}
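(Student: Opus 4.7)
The plan is to prove Theorem~\ref{thm:GM} by making the Grimmett-Marstrand renormalisation \cite{GriMar90} quantitative, using Proposition~\ref{prop:uniqueness} (Cerf's uniqueness of macroscopic clusters) as the essential input that controls the sprinkling budget. First I would partition ${\rm Slab}^d_n$ into blocks of side $L$ of order $n$, arranged along the two infinite directions, and declare a block to be \emph{good} at parameter $p$ if a macroscopic cluster connects small `rendezvous' boxes of side $L^\alpha$ placed near each face of the block. By Proposition~\ref{prop:uniqueness}, a good block contains essentially a single giant cluster, with failure probability at most $L^{-\alpha}$. At $p = p_n$, the probability of a good block is bounded below by a constant, using the definition of the correlation length (which guarantees constant-order probability for connections on scale $n$) together with FKG and Proposition~\ref{prop:uniqueness}.

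Given two adjacent good blocks at parameter $p$, the sprinkling step opens extra edges in a neighbourhood of the common face with the goal of gluing the unique giant clusters: since the rendezvous boxes have polynomial size $L^\alpha$, a sprinkling of order $\varepsilon$ glues them with probability close to $1$ once $\varepsilon L^\alpha$ is large. The renormalised good-and-glued block process on the $\mathbb{Z}^2$ lattice of block centres then becomes supercritical; since the blocks live in a slab of finite transverse width, percolation of the renormalised process implies $\theta_{{\rm Slab}^d_n}(p) > 0$, with macroscopic density of the same order as the excess above criticality in the renormalised process.

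The heart of the argument is an iterative boosting over $\sim \log n$ dyadic scales: doubling the block size at each step lifts the good-block probability from constant up to a value close to $1$. If each doubling can be made to cost an increment of sprinkling of order $1/(\log n)^{3/2}$, then summing over the $\log n$ scales yields total sprinkling $O(1/\sqrt{\log n})$, and the density lower bound $1/(2\sqrt{\log n})$ on $\theta_{{\rm Slab}^d_n}$ matches the excess sprinkling budget used at the last scale (via a linear-type lower bound $\theta \geq c(p - p_c({\rm Slab}^d_n))$ applied after a careful choice of constants). The main obstacle is the calibration of this scale-by-scale induction: the per-scale sprinkling must be small enough that the total is $O(1/\sqrt{\log n})$, yet large enough both to geometrically reduce the bad-block probability and to glue unique clusters across interfaces with probability tending to $1$. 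This is precisely where the polynomial error $L^{-\alpha}$ of Proposition~\ref{prop:uniqueness} is indispensable: a merely qualitative uniqueness statement would cost a constant at each scale and the total sprinkling would remain bounded below.
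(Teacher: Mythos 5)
Your proposal breaks down at two places, and both are exactly the points the paper's argument is built to address. First, the base-scale input is not available: you assert that at $p=p_n$ a ``good block'' at scale $L\asymp n$ (rendezvous boxes of side $L^\alpha$ near each face joined by a single macroscopic cluster) has probability bounded below by a constant ``by the definition of the correlation length''. The definition $\xi_{p_n}=n$ only controls the exponential decay rate of the one-arm event; at scale $n$ it yields (via $\varphi_{p_n}(S)\ge 1/e$ for all $0\in S\subset\Lambda_n$) only a polynomially small bound of order $n^{-(d-1)}$ for $\mathbb P_{p_n}[0\leftrightarrow\partial\Lambda_n]$, and even the modest bound $1/\sqrt{\log n}$ already costs a sprinkling of $1/\sqrt{\log n}$ together with a differential inequality --- that is precisely Proposition~\ref{lem:oneArm}. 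A constant lower bound for crossing/gluing events of the kind you need at or near criticality is essentially the open problem itself, and no such input appears anywhere in the paper.

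Second, the calibration of your dyadic induction cannot work, and this is where the paper takes a structurally different route. To glue two unique macroscopic clusters with a sprinkling of size $\delta$ (as in Lemma~\ref{sec:lemma-1}) one needs the relevant connection event to already hold with probability $1-\exp(-c/\delta)$; otherwise a $\delta$-sprinkling has too few candidate interface edges to find an open one. Hence a per-scale sprinkling of order $(\log n)^{-3/2}$ would require failure probabilities of order $\exp(-c(\log n)^{3/2})$ at \emph{every} scale, including the first, whereas your starting failure probability is at best a constant; starting from a constant, the sprinkling needed at scale $j$ is at least $c/\log(1/q_j)$ with $q_j$ the failure probability there, and the sum over scales stays bounded below by a constant. (This is essentially why the authors report that the seeded, scale-iterative Grimmett--Marstrand scheme only gave $p_c({\rm Slab}_n)<p_c+C/\log\log\log\log n$.) The paper avoids iteration over scales entirely: Proposition~\ref{lem:Sharp} --- a KKL/Talagrand sharp-threshold estimate, with Proposition~\ref{prop:uniqueness} used to bound the individual pivotal influences --- boosts $\mathbb P[\Lambda_{n^\beta}\leftrightarrow\partial\Lambda_n]$ from $1/\sqrt{\log n}$ to $1-e^{-\sqrt{\log n}}$ with a \emph{single} sprinkling $C/\sqrt{\log n}$, and then the single-scale seedless renormalisation of Theorem~\ref{thm:quantitative GM} (scales $n^{\alpha^3}<n^{\alpha^2}<n^{\alpha}<n$, with $1/\eps^2$ disjoint boxes placed along a connected set) converts this into slab percolation after one more $C\eps$ sprinkling, yielding directly $\mathbb P_{p_n+C/\sqrt{\log n}}[0\leftrightarrow\infty\text{ in }{\rm Slab}^d_n]\ge\eps/2$ with $\eps=1/\sqrt{\log n}$. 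Your proposal omits the sharp-threshold ingredient altogether, and without it there is no mechanism to reach the $\exp(-1/\eps)$-type probabilities that the gluing demands within a total budget $O(1/\sqrt{\log n})$. A further, smaller, issue: the density bound $1/(2\sqrt{\log n})$ comes out of the renormalisation itself, not from a linear bound $\theta\ge c\,(p-p_c({\rm Slab}^d_n))$, which could not give it since $p_n+C/\sqrt{\log n}$ is not known to exceed $p_c({\rm Slab}^d_n)$ by anything of order $1/\sqrt{\log n}$.
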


Let us explain the main elements in the proof of Theorems 
\ref{thm:characteristic-length} and  \ref{thm:GM} (they share 95 percent of the proof). The proof is composed of the following 3 steps, each of which requires
to increase the probability somewhat.

\noindent {\bf Step 1}. The result of Chayes and Chayes \cite{CC86} stating that $\mathbb{P}_{p_{c}+\varepsilon}[0\leftrightarrow\infty]\ge\varepsilon$
(\cite{AizBar87} was the first unconditioned proof). We take the opportunity
to give a new proof of this inequality, based on the ideas of \cite{DumTas15a}.

\noindent {\bf Step 2}. The result of Kahn, Kalai and Linial (\cite{KKL88}, see also \cite{Rus82, BouKahKal92, Tal94}) that any boolean function
with small individual influences has at least logarithmic total influence.
We apply this to the function $\mathbbm{1}\{\Lambda_{n}\leftrightarrow\infty\}$
for some $n$. To show that all individual influences are small (as
$n\to\infty)$ we use a geometric argument connecting the probability that a certain edge is pivotal to the same probability for nearby edges. Thus from the information that $\mathbb{P}_{p_{c}+\varepsilon}[0\leftrightarrow\infty]\ge\varepsilon$
we can get $\mathbb{P}_{p_{c}+2\varepsilon}[\Lambda_{n}\leftrightarrow\infty]\ge1-\delta$
(with appropriate connections between the parameters $\varepsilon$, $n$
and $\delta$).

\noindent {\bf Step 3.} A ``seedless'' renormalisation scheme, based on ideas of \cite{2013arXiv1312.1946M}. In Grimmett-Marstrand the renormalisation
follows by finding seeds, i.e.~small boxes (say of size $n$) all whose
edges are open, which are on the boundary of a much larger box, say
of size $N$ \cite{GriMar90}. A first version of our argument which used the same
scheme gave $p_{c}(\textrm{Slab}_{n})<p_{c}+C/\log\log\log\log n$.
Here the path that already exists inside the $N$-box is used in place
of the seeds, each piece of it, if sufficiently separated, can be
used independently. Proposition \ref{prop:uniqueness}
plays a crucial role in the argument. Sprinkling is used as in \cite{GriMar90}, so eventually
we get a renormalisation scheme at $p_{c}+3\varepsilon$. 

The value $\eps=1/\sqrt{\log n}$ comes from the interaction of Steps 2 and
3. In Step 3 we do an $\eps$-sprinkling and the proof requires connections happening with probability at least $1-\exp(1/\varepsilon)$. This forces the $\delta$ of Step 2 to be smaller than $\exp(-1/\varepsilon)$.
But this forces $n$ to be $\exp(1/\varepsilon^{2})$ since our
estimate of the total influence is only logarithmic in $n$. Thus,
the use of \cite{KKL88} is the main constraining factor.

Finally, let us remark on the subcritical case in Theorem \ref{thm:characteristic-length}, i.e.\ on the bound of $\xi_p$ for $p<p_c$. It is a corollary from the supercritical result. There are various ways to perform this conclusion, but here the simplest was simply not to start all the process (i.e.~Steps 1--3) from $p_c$ but rather from an appropriate $p<p_c$ where $\xi_p$ is sufficiently large. Thus we prove both results in one fell swoop. Let us stress again, though, that it is the supercritical result which is central and the subcritical result is merely a corollary.

In \S\ref{sec:lower-bound-prob}-\ref{sec:seedless} we detail these steps, one step per section. In the last sections we prove Theorems \ref{thm:characteristic-length} and \ref{thm:GM}, as well as Proposition \ref{prop:uniqueness}.

\paragraph{Acknowledgments} This work is based on earlier unpublished results achieved with Vladas Sidoravicius, we wish to take this opportunity to thank Vladas for his contributions. Many readers of the first version made important suggestions and corrections, including Rob van den Berg, Raphael Cerf, Roger Silva and an anonymous referee. We thank Ariel Yadin for asking what is known about lower bounds.

HDC was supported by the ERC CriBLaM, a chair IDEX from Paris-Saclay, a grant from the Swiss NSF and the NCCR SwissMap also funded by the Swiss NSF. GK was supported by the Israel Science Foundation, by Paul and Tina Gardner and by the Jesselson Foundation. VT was supported by the NCCR SwissMap, funded by the Swiss NSF.

\section{Preliminaries}\label{sec:prelim}
Fix an integer $d\ge 2$. Two vertices $x$ and
$y$ of $\Z^d$ are said to be {\em neighbours} (denoted $x\sim y$)
if $\|x-y\|_2=1$. In such a case, $\{x,y\}$ is called an {\em edge} of
$\Z^d$. The set of edges is denoted by $E(\Z^d)$. For $n\ge1$, introduce the box
$\Lambda_n:=\{-n,\ldots,n\}^d$ and its (vertex) boundary $\partial
\Lambda_n\coloneqq\Lambda_n\setminus\Lambda_{n-1}$.
%Define also the \emph{edge} boundary $\partial_E(S)\coloneqq \{\{x,y\}:x\in S,y\in \Z\setminus S\}$.
Also, we define ${\rm Slab}^d_n:=\Z^2\times\{-n,\dots,n\}^{d-2}$.

A percolation configuration $\omega=(\omega(e):e\in E(\Z^d))$ is an element of $\{0,1\}^{E(\Z^d)}$. If $\omega(e)=1$, the edge $e$ is said to be {\em open}, otherwise it is said to be {\em closed}. Let $S\subset \mathbb Z^d$. Two vertices $x$ and $y$ are said to be {\em connected in $S$} (in $\omega$) if there exists a path $x=v_0\sim v_1\sim v_2\sim\dots\sim v_k=y$ of vertices in $S$ such that $\omega(\{v_i,v_{i+1}\})=1$ for every $0\le i<k$. Let $A$ and $B$ be two subsets of $S$, we write $A\lr[S] B$ if some vertex of $A$ is connected in $S$ to some vertex of $B$,  and $A\lr[S]\infty$ if $A\lr[S]\partial \Lambda_n$ holds for any $n\ge 1$. If $S=\mathbb Z^d$, we drop it from the notation and simply write $A\lr B$ and $A\lr\infty$. A {\em cluster} is a maximal set of vertices that are connected together in $\omega$. 

For $p\in[0,1]$, consider the
Bernoulli bond percolation measure $\bbP_p$ on $\{0,1\}^{\mathbb E(\Z^d)}$ under which the variables $\omega(e)$ with $e\in E(\Z^d)$ are i.i.d.\@ Bernoulli random variables with parameter $p$. Define $p_c=p_c(d)\in(0,1)$ such that
$\mathbb P_p[0\leftrightarrow\infty]$ is $0$ when $p<p_c$ and strictly positive when $p>p_c$. See \cite[Theorem 1.10]{Gri99a} for the fact that indeed $0<p_c<1$.

An event $A$ is {\em increasing} if it is stable to opening edges. The FKG inequality states that increasing events are positively correlated, see \cite[Theorem 2.2]{Gri99a}. The edge $e$ is {\em pivotal for the event $A$} if the configurations $\omega^e$ and $\omega_e$ defined by
$$\omega^e(f)=\begin{cases}\omega(f)&\text{ if }f\ne e\\ 1&\text{ if }f=e,\end{cases}\qquad\text{ and }\omega_e(f)=\begin{cases}\omega(f)&\text{ if }f\ne e\\ 0&\text{ if }f=e\end{cases}$$
satisfy $\omega^e\in A$ and $\omega_e\notin A$.

We will denote by $c$ and $C$ arbitrary constants which depend only on the dimension $d$ (and occasionally other parameters, which will be noted). Their value may change from formula to formula, and even inside the same formula. Occasionally we will number them for clarity. We will use $c$ for constants which are sufficiently small and $C$ for constants which are sufficiently large.

\section{The result of Chayes and Chayes}
\label{sec:lower-bound-prob}

In this section, we prove the following (recall that $p_n$ is the smallest $p<p_c$ such that $\xi_{p}=n$).
\begin{proposition}\label{lem:oneArm}
  For every $n$ large enough,  
  \begin{equation}\label{eq:3}
    \mathbb P_{p_n + \frac{1}{\sqrt{\log n}}}[0\lr \partial\Lambda_{n}]\ge \frac{1}{\sqrt{\log n}}.
  \end{equation}
\end{proposition}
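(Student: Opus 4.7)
Let $\theta_n(p):=\mathbb P_p[0\lr\partial\Lambda_n]$, so the statement to prove is $\theta_n(p_n+1/\sqrt{\log n})\ge 1/\sqrt{\log n}$. This is the finite-volume analogue of the Chayes--Chayes inequality $\mathbb P_{p_c+\eps}[0\lr\infty]\ge\eps$, so I would follow that philosophy: derive a finite-volume differential inequality for $\theta_n(p)$ in the spirit of \cite{DumTas15a}, and then integrate it between $p_n$ and $p_n+1/\sqrt{\log n}$.

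Concretely, I would aim at a differential inequality of the form
\begin{equation}
\theta_n'(p)\;\ge\;c\,\frac{n}{S_n(p)}\bigl(1-\theta_n(p)\bigr),\qquad S_n(p):=\sum_{k=0}^{n-1}\theta_k(p),
\end{equation}
obtained by applying the OSSS inequality to $\mathbf 1_{\{0\lr\partial\Lambda_n\}}$ with the randomized decision tree that, after choosing a uniformly random radius $k\in\{0,\dots,n-1\}$, explores the cluster of $\partial\Lambda_k$. The revealment at each edge is then bounded by $CS_n(p)/n$, while $\sum_e\mathbb P_p[e\text{ pivotal}]$ equals a constant multiple of $\theta_n'(p)$ by Russo's formula, which together give the inequality. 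A direct OSSS computation actually yields the weaker logistic form $\theta_n'\ge c(n/S_n)\theta_n(1-\theta_n)$; upgrading this to the linear version above is the technical novelty required, and I would attempt it by pairing OSSS with an Aizenman--Barsky-type second inequality, or by symmetrizing the decision tree so that the variance term $\theta_n(1-\theta_n)$ is effectively replaced by $1-\theta_n$.

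Once the linear differential inequality is available, the rest is elementary. The crude bound $S_n(p)\le n$ (each $\theta_k\le 1$) reduces it to $\theta_n'(p)\ge c(1-\theta_n(p))$. Integrating from $p_n$ yields $1-\theta_n(p)\le e^{-c(p-p_n)}$, hence $\theta_n(p_n+\delta)\ge 1-e^{-c\delta}\ge c\delta/2$ for $\delta$ small. Plugging in $\delta=1/\sqrt{\log n}$ gives $\theta_n(p_n+1/\sqrt{\log n})\ge c/(2\sqrt{\log n})$, and the multiplicative constant $c/2$ is absorbed either by enlarging the sprinkling window to $C/\sqrt{\log n}$ with $C$ large (as happens downstream in Theorem \ref{thm:GM}) or by requiring $n$ sufficiently large.

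The principal obstacle is the passage from the logistic OSSS inequality to the linear version. With only the logistic form, integration from $p_n$ would merely multiply the seed $\theta_n(p_n)$ by $e^{O(1)}$ inside a window of size $1/\sqrt{\log n}$, which is useless unless we already have $\theta_n(p_n)\gtrsim 1/\sqrt{\log n}$. The definition of $p_n$ gives $\xi_{p_n}=n$, but as a $\limsup$ statement this does not by itself provide a pointwise lower bound on $\theta_n(p_n)$: the subsequence realizing the $\limsup$ could fail to contain the scale $n$. Overcoming this --- either by a linear differential inequality, or by a careful extraction of a ``seed'' scale $k$ of order $n$ on which the $\limsup$ does supply $\theta_k(p_n)\ge e^{-k/n}$ and then transferring the lower bound to $\theta_n$ via the monotonicity $\theta_n\ge\theta_k$ for $k\ge n$ --- is where the ideas of \cite{DumTas15a} have to be used in a new way, and I expect it to absorb most of the proof.
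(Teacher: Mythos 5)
You have correctly identified the shape of the argument --- a differential inequality that is \emph{linear} in $1-\theta_n$, valid on the window $[p_n,p_n+1/\sqrt{\log n}]$ and then integrated --- and you have correctly diagnosed that a logistic inequality is useless here because no pointwise lower bound on $\theta_n(p_n)$ is available. But the step you defer (upgrading the logistic OSSS form to the linear form) is precisely the content of the proposition, and the inequality you propose, $\theta_n'(p)\ge c\,(n/S_n(p))\,(1-\theta_n(p))$, is false in general: for a small fixed $\delta$ and $p\in[\delta,2\delta]$ one has $S_n(p)=O(1)$ uniformly in $n$, so integrating your inequality over that interval would give $\theta_n(2\delta)\ge c'\,n\,\delta$, contradicting $\theta_n\le 1$ once $n$ is large. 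Neither of your suggested repairs (an Aizenman--Barsky-type companion inequality, symmetrizing the decision tree) is developed, so the proof has a genuine gap at its central point.

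The paper's mechanism is different and does not go through OSSS at all. It uses the quantity $\varphi_p(S)=\sum_{x\sim y,\,x\in S,\,y\notin S}p\,\mathbb P_p[0\lr[S]x]$ of \cite{DumTas15a} and two of its properties: (i) for every $S\subset\Lambda_n$ containing $0$ and every $k$, $\mathbb P_p[0\lr\partial\Lambda_{nk}]\le\varphi_p(S)^{k-1}$, so that $\varphi_p(S)<\tfrac1e$ for some such $S$ forces $\xi_p<n$ (the bound holds at every scale, which is why the $\limsup$ issue you worry about never arises); since $\xi_{p_n}=n$, this yields $\inf_{0\in S\subset\Lambda_n}\varphi_{p_n}(S)\ge\tfrac1e$, and by monotonicity of $\varphi_p(S)$ in $p$ the same holds for all $p\ge p_n$; (ii) Lemma~2.1 of \cite{DumTas15a}, which is \emph{already linear}: $\theta_n'(p)\ge\frac1{p(1-p)}\bigl[\inf_{0\in S\subset\Lambda_n}\varphi_p(S)\bigr](1-\theta_n(p))$. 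Combining the two gives $\theta_n'\ge\tfrac4e(1-\theta_n)$ on $[p_n,1]$, and integrating over a window of length $1/\sqrt{\log n}$ gives \eqref{eq:3} with the stated constant (here $\tfrac4e>1$, which is also how the paper avoids the loss of a multiplicative constant that you propose to absorb downstream). In short, the linearization you were looking for is not an upgrade of OSSS but comes for free from the $\varphi_p(S)$ inequality, and the definition of $p_n$ enters by lower-bounding $\varphi_{p_n}(S)$, not $\theta_n(p_n)$.
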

(For the purpose of the supercritical result it would have been enough to know this at $p_c+1/\sqrt{\log n}$, which is exactly the original result of Chayes and Chayes, but for the subcritical result we want to know it with $p_c$ replaced by $p_n$.)
% We would like, as a starting estimate, to obtain lower bounds on the probability that $0$ is connected to distance $n$ at $p_0(n)$. Unfortunately, such a bound is expected to be polynomially small, a fact which will not be sufficient for the next sections. For this reason, we increment $p$ slightly.
% \begin{lemma}\label{lem:one arm}
% For any $n\ge e^4$, 
% \begin{equation}
% \mathbb P_{q_1(n)}[0\lr \partial\Lambda_n]\ge \frac{1}{\sqrt{\log n}},\label{eq:4}
% \end{equation}
%
% where $q_1(n):=p_0(n)+\frac{2}{\sqrt {\log n}}$.
% \end{lemma}
%
% \begin{proof}
\begin{proof}
Given a finite set $S$ containing $0$, and a parameter $p\in[0,1]$, define
\begin{equation}\label{eq:5}
  \varphi_p(S):=\sum_{ \substack{x \sim y \\x\in S,\, y\notin S}}p\,\mathbb P_p[0\lr[S]x].
\end{equation}
Fix $n\ge1$. Let us recall two relations between this quantity and the one-arm probability, established in \cite{DumTas15a}. First, for every $S\subset \Lambda_n$ containing $0$, the last displayed equation of Section 2.1 of \cite{DumTas15a} gives the upper bound
\begin{equation}
\mathbb P_p[0\leftrightarrow \partial\Lambda_{nk}]\le \varphi_p(S)^{k-1}.\label{eq:6}
\end{equation}
for every $k\ge1$. Also, the quantity $\varphi_p(S)$ can be used to bound the derivative of the one-arm probability. Lemma 2.1 of \cite{DumTas15a} states that for every $p\in [0,1]$,
\begin{equation}\label{eq:7}
\frac{d}{dp}\mathbb P_p[0\leftrightarrow\partial\Lambda_n]\ge
  \frac1{p(1-p)}\cdot\Big[\inf_{0\in S\subset\Lambda_n}\varphi_p(S)\Big]\cdot(1-\mathbb P_p[0\leftrightarrow\partial\Lambda_n]).
\end{equation}

The proof of Proposition~\ref{lem:oneArm} can be easily derived from the two equations above. If for some $p\in[0,1]$, there exists a subset $S$ of $\Lambda_n$ with $\varphi_p(S)<\frac1e$, then one deduces immediately from \eqref{eq:6} that
$$\mathbb P_p[0\leftrightarrow \partial\Lambda_{k}]\le e^{-A\lfloor k/n\rfloor -1},\qquad A>1,$$
which implies that $\xi_p< n$. As a consequence, $\varphi_{p_n}(S)\ge\frac1e$ for any set $S$ included in $\Lambda_n$ containing $0$. Since $\varphi_p(S)$ is increasing in $p$, we have $\varphi_p(S)\ge \frac1e$ for any $p\ge p_n$, and the differential inequality~\eqref{eq:7} gives that for every $p\ge p_n$,
\begin{equation}\frac{d}{dp}\mathbb P_p[0\leftrightarrow\partial\Lambda_n]\ge \tfrac{4}e(1-\mathbb P_p[0\leftrightarrow\partial\Lambda_n]).\label{eq:8}
\end{equation}
%which concludes the proof. Indeed, let 
Now, set $p_n':=p_n+1/\sqrt{\log n}$. Either $\mathbb P_{p_n'}[0\leftrightarrow\partial\Lambda_n]> 1-\tfrac{e}4$, or integrating \eqref{eq:8} between $p_n$ and $p'_n$ gives \eqref{eq:3}. This concludes the proof.
\end{proof}
%
%. Then Equation~\eqref{eq:3} is
%straightforward if
%\begin{equation*}
%\mathbb P_{p_n'}[0\leftrightarrow\partial\Lambda_n]\ge 1/2.
%\end{equation*}
%On the other hand, the inequality above does not hold, then the differential inequality~\eqref{eq:8} 
%implies by monotonicity that  for every $p_n\le p\le p_n'$,
%\begin{equation}
%  \label{eq:9}
%  \frac{d}{dp}\mathbb P_p[0\leftrightarrow\partial\Lambda_n] \ge \frac12.
%\end{equation}
%The proof follows by  integrating between $p_n$ and $p_n'$.
\section{Sharp threshold}
\label{sec:sharp-thresh-prob}

In this section we prove the following result.
\begin{proposition}\label{lem:Sharp}
  For every $0<\beta<1$, there exists $C=C(\beta,d)>0$ such that for every $n$
  \begin{equation*}
    \mathbb P_{p_n + C/\sqrt{\log n}} [\Lambda_{n^\beta}\leftrightarrow \partial\Lambda_n]\ge 1-e^{-\sqrt{\log n}}.
  \end{equation*}
\end{proposition}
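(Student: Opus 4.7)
The plan is to apply the Kahn-Kalai-Linial sharp-threshold theorem to the increasing event $A_n := \{\Lambda_{n^\beta}\leftrightarrow\partial\Lambda_n\}$, viewed as a Boolean function of the edges of $\Lambda_n$. Together with Russo's formula, the Talagrand version of KKL yields
\begin{equation*}
\frac{d}{dp}\mathbb P_p[A_n]\;\ge\;c_1\,\mathbb P_p[A_n]\bigl(1-\mathbb P_p[A_n]\bigr)\,\log\frac{1}{M_n(p)},
\end{equation*}
where $M_n(p):=\max_{e\in E(\Lambda_n)}\mathbb P_p[e\text{ is pivotal for }A_n]$. The strategy is then to (i) bound $M_n(p)$ by a negative power of $n$ via a geometric comparison argument, (ii) use Proposition~\ref{lem:oneArm} as the initial condition, and (iii) integrate.

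The crux is step (i). Following the hint in the introduction, I would establish $\log(1/M_n(p))\ge c_2(\beta,d)\,\log n$ by comparing the pivotal probabilities of distinct edges. The rough idea is that any pivotal crossing of the annulus has length at least $n-n^{\beta}\asymp n$, and a local resampling argument (open a short detour around $e'$, close a short portion of the crossing around $e$ so that $e'$ becomes the unique bottleneck of the crossing) shows that the pivotal probability of any bulk edge $e$ is comparable, up to a constant, to that of $\asymp n$ other edges obtained by sliding $e$ along a typical crossing direction. Consequently $M_n(p)$ is bounded by $C/n$ times the total influence $\sum_{e'}\mathrm{Inf}_{e'}(A_n)=\tfrac{d}{dp}\mathbb P_p[A_n]$. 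Feeding this back into the KKL inequality and splitting into two cases (the derivative is either already very large, so the threshold is trivially sharp, or moderate, so the $\log n$ term dominates $\log(\mathrm{derivative})$) produces the required logarithmic gain.

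The remaining steps are routine. Since $\{0\leftrightarrow\partial\Lambda_n\}\subseteq A_n$, Proposition~\ref{lem:oneArm} gives $\mathbb P_{p_0}[A_n]\ge 1/\sqrt{\log n}$ at $p_0:=p_n+1/\sqrt{\log n}$. With the bound on $M_n$, the KKL inequality simplifies to
\begin{equation*}
\frac{d}{dp}\log\frac{\mathbb P_p[A_n]}{1-\mathbb P_p[A_n]}\;\ge\; c_4\,\log n,
\end{equation*}
and integrating from $p_0$ over an interval of length $(C-1)/\sqrt{\log n}$ raises $\log(\mathbb P/(1-\mathbb P))$ by $c_4(C-1)\sqrt{\log n}$, which for $C$ large enough dominates the initial $-\tfrac12\log\log n$ and yields the desired $\mathbb P_{p_n+C/\sqrt{\log n}}[A_n]\ge 1-e^{-\sqrt{\log n}}$.

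The main obstacle is the influence bound in step (i). We are in a near-critical regime where arm probabilities at scale $n$ stay of order $1/\sqrt{\log n}$, so the standard van den Berg-Kesten bound $\mathrm{Inf}_e(A_n)\le\mathbb P_p[e\leftrightarrow\Lambda_{n^\beta}]\,\mathbb P_p[e\leftrightarrow\partial\Lambda_n]$ yields only $M_n\lesssim 1/\log n$, translating to a mere $\log\log n$ gain after taking $\log(1/M_n)$, nowhere near the required $\log n$. The factor-$n$ saving has to come entirely from the geometric comparison of pivotal probabilities of distinct edges, and I expect the technical delicacy to lie in making the local rearrangement rigorous in a way that preserves \emph{global} pivotality (destroys all alternative crossings, not merely the original one through $e$) and in handling edges close to $\Lambda_{n^\beta}$ or $\partial\Lambda_n$, where translation invariance of the rearrangement is broken.
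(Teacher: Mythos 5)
Your outer shell---Talagrand's inequality \eqref{eq:diff1} applied to the increasing event $\{\Lambda_{n^\beta}\leftrightarrow\partial\Lambda_n\}$, the initial condition from Proposition \ref{lem:oneArm} at $p_n+1/\sqrt{\log n}$, and the integration of the logarithmic derivative over an interval of length $O(1/\sqrt{\log n})$---is exactly the paper's, and the integration step as you set it up is correct. The genuine gap is the step you yourself flag as the obstacle: the polynomial (in $n$) bound on the maximal pivotal probability. The ``slide the bottleneck by local resampling'' comparison does not work as described. If $e$ is pivotal for the annulus crossing, then every open crossing passes through $e$, but beyond $e$ the crossing cluster may branch into many disjoint routes towards $\partial\Lambda_n$ (and towards $\Lambda_{n^\beta}$); to make a nearby edge $e'$ pivotal one must destroy \emph{all} alternative connections avoiding $e'$, which is a global constraint on the cluster and not achievable by a bounded-cost local surgery. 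Controlling exactly this multiplicity of crossing clusters is the known hard point (recall that at criticality in high dimensions a box does host many disjoint large clusters), so the claimed estimate $M_n(p)\le (C/n)\sum_{e'}\mathrm{Inf}_{e'}$, and with it the dichotomy you build on it, is unsubstantiated; the rest of your argument cannot start without it.

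For contrast, the paper obtains the needed bound (Lemma \ref{lem:pivotal}: the closed-pivotal probability is at most $m^{-\alpha/4}$ with $m=\lfloor n^\beta\rfloor$) from a quantitative \emph{uniqueness} input rather than any rearrangement: Cerf's estimate, Proposition \ref{prop:uniqueness}, which bounds uniformly in $p$ the probability $\mathbb P_p[A_2(n^\alpha,n)]\le n^{-\alpha}$ of two disjoint clusters crossing an annulus. For an edge at distance at least $m^{1/4}$ from $\partial\Lambda_m\cup\partial\Lambda_n$, closed pivotality forces a translated $A_2(m^{\alpha/4},m^{1/4})$ event around the edge, and the bound is immediate. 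For edges near $\Lambda_m$ or near $\partial\Lambda_n$ (the boundary problem you correctly anticipate), the paper translates the \emph{boxes} rather than the edge: one finds $I\approx m^{3/4}$ translates $\tau^i\Lambda_m,\tau^i\partial\Lambda_n$ all containing the edge in the correct position, and a Cauchy--Schwarz second-moment argument shows $(I\,\mathbb P_p[B_0])^2\le \mathbb E_p[M]+2\sum_{i<j}\mathbb P_p[B_i\cap B_j]$, where each pairwise intersection forces two disjoint clusters crossing from $\Lambda_{2m}$ to $\Lambda_{n/2}$ and is again controlled by Proposition \ref{prop:uniqueness}. Note also that since your differential inequality is integrated over $p$ in a window around $p_n$ (not at $p_c$ only), the uniformity in $p$ of this two-cluster estimate is essential; this is why the paper devotes a section to extending Cerf's bound to all $p$. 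If you want to salvage your write-up, replace step (i) by this two-arm/uniqueness mechanism; the sliding/resampling idea would itself require precisely the uniqueness control it is meant to replace.
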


We may assume without loss of generality that $\beta<\alpha$, where $\alpha$ is chosen such that the statement of Proposition~\ref{prop:uniqueness} holds. Further, we may assume $n$ to be sufficiently large, as for small $n$ and large $C$ we would have $p_n+C/\sqrt{\log n}>1$, making the claim trivial. Set $m:=\lfloor n^\beta\rfloor$ for brevity.  The proposition will follow, using standard arguments, once we prove the following lemma.  
%We rely on the following sharp threshold statement.
\begin{lemma}\label{lem:pivotal}With $n$, $\alpha$, $\beta$ and $m$ as above, and for any $p$,
\begin{equation}
  \label{eq:28}
  \mathbb P_p[e\text{ is a closed  pivotal for  } \Lambda_m\leftrightarrow\partial\Lambda_n ]\le\frac1{m^{\alpha/4}}.
\end{equation}
\end{lemma}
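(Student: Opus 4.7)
The plan is to derive Lemma~\ref{lem:pivotal} from Proposition~\ref{prop:uniqueness} applied at the scale $r := \lfloor m^{1/4} \rfloor$, chosen so that $r^{-\alpha} = m^{-\alpha/4}$ matches the target bound. The starting observation is structural: if $e = \{x,y\}$ is a closed pivotal for $\{\Lambda_m \lr \partial \Lambda_n\}$, then in $\omega$ the open clusters $\mathcal{C}_x$ and $\mathcal{C}_y$ of $x$ and $y$ are disjoint and satisfy $\mathcal{C}_x \cap \Lambda_m \ne \emptyset$, $\mathcal{C}_x \cap \partial\Lambda_n = \emptyset$, $\mathcal{C}_y \cap \partial\Lambda_n \ne \emptyset$, and $\mathcal{C}_y \cap \Lambda_m = \emptyset$. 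Two disjoint open ``arms'' thus emanate from $e$, one reaching $\Lambda_m$ and the other reaching $\partial\Lambda_n$, and the strategy is to exhibit them as witnesses for a translated copy of $A_2(r^\alpha, r)$.

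The bulk case, when both endpoints of $e$ lie at $\ell^\infty$-distance at least $r$ from both $\Lambda_m$ and $\partial\Lambda_n$, is handled directly. Setting $B := e + \Lambda_r$, both $\mathcal{C}_x$ and $\mathcal{C}_y$ must leave $B$ on their way to $\Lambda_m$ and $\partial\Lambda_n$, so their restrictions to $B$ form two disjoint clusters in the restricted configuration, each containing a vertex in $e + \Lambda_1 \subseteq e + \Lambda_{r^\alpha}$ and a vertex in $e + \partial\Lambda_r$. This is precisely the event $A_2(r^\alpha, r)$ centred at $e$; by translation invariance of $\mathbb{P}_p$ and Proposition~\ref{prop:uniqueness}, its probability is at most $r^{-\alpha} \le C m^{-\alpha/4}$. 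Note that if one instead insisted on $r = m^{1/2}$, this case alone would give the stronger bound $m^{-\alpha/2}$; it is the boundary case that forces us down to $m^{-\alpha/4}$.

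The hard part is the boundary case, where $e$ lies within distance $r$ of $\Lambda_m$ (the case $d(e,\partial\Lambda_n) < r$ is symmetric since $n \gg r$). Now $\mathcal{C}_x$ may stay inside $B$ and the direct argument breaks. My first move would be to shift the centre of the box outward, in a direction normal to the nearest face of $\Lambda_m$, by an amount of order $r$, so that the shifted box is disjoint from $\Lambda_m$; one then needs $e$ to lie in an inner box of radius $r^\alpha$ around the shifted centre, and this is exactly what forces the scale $r = m^{1/4}$ rather than $m^{1/2}$, since one must accommodate both a shift of size $\sim r$ and an inner-box radius of $\sim r^\alpha < r$. The truly delicate sub-case is when an endpoint of $e$ actually sits in $\Lambda_m$, so that no non-trivial short arm $\mathcal{C}_x$ exists at all. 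Here I would combine the long arm $\mathcal{C}_y$ (from a neighbour of $\Lambda_m$ to $\partial\Lambda_n$, avoiding $\Lambda_m$) with the condition $\Lambda_m \not\lr \partial\Lambda_n$ in $\omega$: the separation of $\Lambda_m$ from $\partial\Lambda_n$, together with the existence of the long arm $\mathcal{C}_y$, should force a two-arm event in an annulus around a carefully chosen point of $\mathcal{C}_y$ at distance of order $r$ from $\Lambda_m$, to which Proposition~\ref{prop:uniqueness} applies at scale $r$. Making this final geometric argument precise --- i.e.\ identifying the second disjoint crossing in the annulus --- is the main technical obstacle I anticipate.
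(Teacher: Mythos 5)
Your bulk case (both endpoints of $e$ at distance at least $m^{1/4}$ from $\partial\Lambda_m\cup\partial\Lambda_n$) is correct and is exactly the paper's first case: a translated $A_2(m^{\alpha/4},m^{1/4})$ around $e$ plus Proposition~\ref{prop:uniqueness}. The genuine gap is the boundary case, which you leave open, and the fix you sketch would not work. When an endpoint of $e$ lies in (or next to) $\Lambda_m$, the event ``$e$ is a closed pivotal'' only produces \emph{one} long open arm, namely $\mathcal C_y$ from $y$ to $\partial\Lambda_n$ avoiding the cluster of $\Lambda_m$; the complementary condition $\Lambda_m\nleftrightarrow\partial\Lambda_n$ is a \emph{closure}-type constraint and supplies no second open crossing anywhere. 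For instance, all edges touching $\Lambda_m$ other than $e$ could be closed, in which case around any point of $\mathcal C_y$ at distance of order $m^{1/4}$ from $\Lambda_m$ the restricted configuration contains a single crossing cluster, and no translate of $A_2(\cdot,m^{1/4})$ occurs. (Your box-shifting geometry is also internally inconsistent: you cannot place $e$ within $m^{\alpha/4}$ of a centre that you have shifted by order $m^{1/4}\gg m^{\alpha/4}$.) More fundamentally, no purely local two-arm estimate at scale $m^{1/4}$ can bound the closed-pivotal probability of a single boundary edge, because the smallness of that probability has different sources in different regimes (for $p$ large it is the event $\Lambda_m\nleftrightarrow\partial\Lambda_n$ that is unlikely, for $p$ small it is the long arm), and the bound must be uniform in $p$.

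The paper's way around this is an averaging (second-moment) argument over translates, which is the idea missing from your proposal. One picks a translation $\tau$ by a vector in $\Lambda_{m^{1/4}}$ such that $e\in\tau^i\Lambda_m$ for all $1\le i\le I\coloneqq\lfloor\tfrac12 m^{3/4}\rfloor$, sets $e_i=\tau^i e$ and $B_i=\{e_i\text{ closed pivotal for }\tau^i\Lambda_m\leftrightarrow\tau^i\partial\Lambda_n\}$, and applies Cauchy--Schwarz to the count $M=\#\{i: B_i\}$, giving $(I\,\mathbb P_p[B_0])^2\le \mathbb E_p[M]+2\sum_{i<j}\mathbb P_p[B_i\cap B_j]$. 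The second open crossing that you could not find locally is produced \emph{globally} by a pair $i<j$: on $B_i\cap B_j$, an endpoint $x_i$ of $e_i$ is joined to $\tau^i\partial\Lambda_n$, an endpoint $x_j$ of $e_j$ is joined to $\tau^j\partial\Lambda_n$, and since $e_j$ is a closed pivotal and $x_i\in\tau^j\Lambda_m$, the two clusters are disjoint; hence two disjoint clusters cross from $\Lambda_{2m}$ to $\Lambda_{n/2}$, and Proposition~\ref{prop:uniqueness} is invoked at the \emph{large} scale ($2m\le (n/2)^\alpha$, probability at most $1/(2m)$), not at scale $m^{1/4}$. This yields $\mathbb P_p[B_0]^2\le 1/I+1/m\le m^{-1/2}$, i.e.\ the $m^{-1/4}\le m^{-\alpha/4}$ bound in the boundary case, with the same trick (translating so that $e\notin\tau\Lambda_n$) handling edges near $\partial\Lambda_n$. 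Without this idea, or a substitute for it, your argument does not prove the lemma.
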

%\begin{lemma}\label{lem:diff}
%  For any $\delta>0$, there exists $c=c(\delta,\beta,d)>0$ such that for every 
%  $p\in [\delta,1-\delta]$ and every $n$ large enough,
%  \begin{equation}
%    \label{eq:10}
%    \frac{f'(p)}{f(p)(1-f(p))}\ge c \log n,\quad\text{where }f(p)=\Pp{\Lambda_{n^\beta} \lr \partial\Lambda_n}.
%  \end{equation}

% \begin{equation}\label{eq:diff}\frac{d}{dp}\Pp{\Lambda_{m}\lr \partial\Lambda_n}\ge c_1\log n\cdot \Pp{\Lambda_m \lr \partial\Lambda_n}(1-\Pp{\Lambda_m\lr \partial\Lambda_n}).\end{equation}
%\end{lemma}

\begin{figure}
\centering
\input{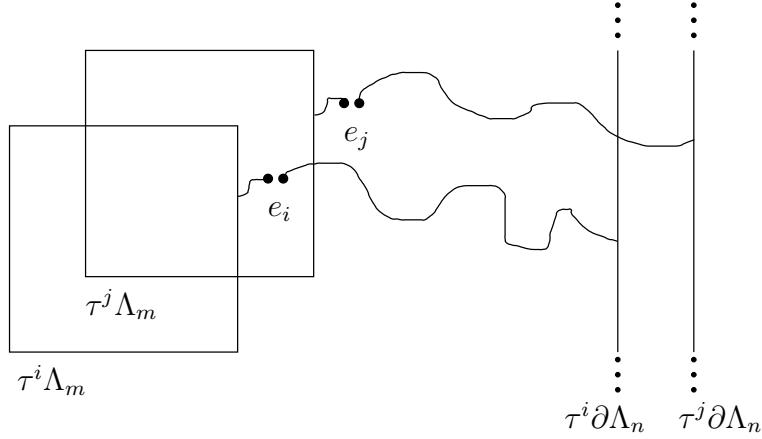}
\caption{Two edges $e_i$, $e_j$ for $i<j$, and the corresponding translated boxes.}\label{fig:tau}
\end{figure}

\begin{proof}%[Proof of Lemma~\ref{lem:diff}]
%Fix $n$ large enough and set $m=n^\beta$ for simplicity of notation.
%
  %In order to prove \eqref{eq:28},
Fix an edge $e\in E$ and distinguish between two cases, depending on whether the edge $e$ is close to $\partial \Lambda_m\cup \partial \Lambda_n$ or not. Write $\rho$ for the $L^\infty$-distance between the edge $e$ and $\partial \Lambda_m\cup \partial \Lambda_n$.

If $\rho\ge m^{1/4}$, then observe that a translated version of the  event $A_2(m^{\alpha/4},m^{1/4})$ must occur around the edge $e$ when the edge is a closed pivotal. Therefore, Proposition~\ref{prop:uniqueness} implies that \eqref{eq:28} holds.

The more difficult case is when $\rho\le m^{1/4}$. Let us first assume that $e$ is at a distance smaller than $m^{1/4}$ of $\Lambda_m$. Then there exists a translation $\tau$ by a vector in $\Lambda_{ m^{1/4}}$ such that $e$ belongs to  the translate $\tau \Lambda_m$ of $\Lambda_m$ by $\tau$, and further, such that $e\in\tau^i\Lambda_m$ (where $\tau^i$ denotes the $i$-th iterate of $\tau$) for every $i\in\{1,\dotsc,I\}$, where $I\coloneqq\lfloor \tfrac12 \:m^{3/4}\rfloor$. For $0\le i<I$ define the edges $e_i=\tau^i e$. It follows that for every $i<j$, both endpoints of the edge  $e_i$ belong to   $\tau^j\Lambda_m$, see Figure \ref{fig:tau}. Define also the event $$B_i:=\{e_i\text{ is a closed pivotal for }\tau^i \Lambda_m\leftrightarrow\tau^i \partial \Lambda_n\}.$$  Writing $M$ for the number of indices $i$ for which $B_i$ occurs, translation invariance and the Cauchy-Schwarz inequality imply
\begin{equation}
(I\cdot \mathbb P_p[B_0])^2 =\mathbb E_p[M]^2\le\mathbb E_p[M^2]=\mathbb E_p[M]+ 2\sum_{i<j}\mathbb P_p[B_i\cap B_j].\label{eq:32}
\end{equation}
Let us bound probabilities on the right-hand side. Fix $i<j$ and assume that $B_i\cap B_j$ occurs. Then, we claim that there must exist two disjoint clusters in $\Lambda_{n/2}$ crossing the annulus between $\Lambda_{2m}$ and $\Lambda_{n/2}$. Indeed, one extremity $x_i$ of $e_i$  must be connected to the boundary of $\tau^i \Lambda_{n}$, and one extremity $x_j$   of $e_j$  must be connected to the boundary of $\tau^j \Lambda_{n}$. The fact that $e_j$ is a {\em closed} pivotal implies in particular that $\tau^j\Lambda_m\nleftrightarrow \tau^j\partial\Lambda_n$ and hence, since  $x_i$  belongs to  $\tau^j\Lambda_m$, it is not connected to the boundary of $\tau^j \Lambda_{n}$ so that the clusters of $x_i$ and $x_j$ in the box $\Lambda_{n/2}$ must be disjoint (see again Figure \ref{fig:tau}). For $n$ large enough, we have $2m\le (n/2)^\alpha$ and Proposition~\ref{prop:uniqueness} implies that
\begin{equation}
  \label{eq:31}
  \mathbb P_p[B_i\cap B_j]\le \frac1{2m}. 
\end{equation}
Plugging this estimate in~\eqref{eq:32} and using the trivial bound $M\le I$, we obtain
\begin{equation}
  \label{eq:33}
  \mathbb P_p[B_0]^2\le \frac 1I + \frac 1 m\le \frac 1{\sqrt m},
\end{equation}
provided $n$ is large enough. This completes the proof in this case.

The exact same reasoning also works if one assumes that the edge $e$ is within distance  $m^{1/4}$ of the boundary of $\Lambda_n$. Consider a translation $\tau$ by a vector in $\Lambda_{ m^{1/4}}$ such that  $e$ does \emph{not} belong to  $\tau \Lambda_n$. One can define the edges $e_i$ and the events $B_i$ as above. In this case, for $i<j$, the edge $e_i$ does not belong to $\tau^j \Lambda_n$ and the same reasoning as above concludes the proof.      
%Exactly as before, we may assume that there exists $i$ such that 
% $$\Pp{\{\mathsf N_i\ge I/m^{3\gamma}\}\cap A_i\cap B}\ge m^{-3\gamma}.$$
% On the event under consideration in the last inequality, there must be at least $I/m^{3\gamma}$ pivotal points simultaneously for the event that $\Lambda_m$ is connected to $\partial\Lambda_n$. We therefore derive the same conclusion as in the previous case.
\end{proof}
\begin{proof}[Proof of Proposition~\ref{lem:Sharp}]
We use  the  following  standard  sharp threshold result for Boolean functions   (see e.g.~\cite[Corollary 1.2]{Tal94}): for any $\delta>0$, there exists a constant $c'=c'(\delta)>0$ such that for any
increasing event $A$
depending on a finite set $E$ of edges, and any $p\in[\delta,1-\delta]$, 
\begin{equation}\label{eq:diff1}\frac{d}{dp}\Pp{A}\ge c'\log \Big(\frac1{\max\{\Pp{e\text{ pivotal for }A}:e\in E\}}\Big)\cdot \Pp{A}(1-\Pp{A}).\end{equation}
%We  will establish that for every edge $e\in E$,
%\begin{equation}
%  \label{eq:28}
%  \mathbb P_p[e\text{ is a closed  pivotal for  } \Lambda_m\leftrightarrow\partial\Lambda_n ]\le\frac1{m^{\alpha/4}}.
%\end{equation}
%The proof follows by first using that the status of an edge is independent of the event that it is pivotal and then
We apply \eqref{eq:diff1} to the event $A=\{\Lambda_m\leftrightarrow \partial \Lambda_n\}$, bounding the pivotality probability inside the log using Lemma \ref{lem:pivotal}. Note that we use here the fact that pivotality is independent of the status of the edge, hence the probability of being \emph{closed} pivotal (which is what we get from Lemma \ref{lem:pivotal}) is $1-p$ times the probability of being pivotal, as needed in \eqref{eq:diff1}. We get that for any $\delta>0$, there exists $c=c(\delta,\beta,d)>0$ such that for every 
  $p\in [\delta,1-\delta]$ and every $n$ large enough,
  \begin{equation}
    \label{eq:10}
    \frac{f'(p)}{f(p)(1-f(p))}\ge c \log n,\quad\text{where }f(p)=\Pp{\Lambda_{n^\beta} \lr \partial\Lambda_n}.
  \end{equation}
%Before proving this statement, let us see how it implies Proposition~\ref{lem:Sharp}. 
Set $p_n':=p_n+\frac 1{\sqrt{\log n}}$. By Proposition~\ref{lem:oneArm},
\begin{equation}
  \label{eq:11}
  \Pp[p_n']{\Lambda_{n^\beta}\lr \partial\Lambda_n}\ge \Pp[p_n']{0\lr \partial\Lambda_n}\ge \frac1{\sqrt{\log n}}.
\end{equation}
We now integrate the differential inequality \eqref{eq:10}.
%This is a standard procedure but let us do it do it in details anyway.
Define $p_n''$ by $f(p_n'')=\frac12$ and then throughout the interval $[p_n',p_n'']$ we can remove the factor $1-f(p)$ from the denominator, and pay only by halving the constant on the right hand side of \eqref{eq:10}. This gives $\log(f)'\ge c\log n$ which we integrate and get that $p_n''$ must be no more than $p_n'+C\log\log n/\log n$. Similarly, in the interval $[p_n'', p_n''+C/\sqrt{\log n}]$ we remove the factor $f(p)$ from the denominator, get $-\log(1-f)'\ge c\log n$ and arrive at the conclusion that $f(p_n''+C/\sqrt{\log n})\ge 1-\exp(-\sqrt{\log n})$, if $C=C(\beta,\delta)$ is sufficiently large. This is exactly the conclusion of the proposition (with a larger $C$ to compensate for replacing $p_n''$ with $p_n$).
%between $p_n'$ and $p_n'+C/\sqrt{\log n}$  gives
%$\PP_{p_n'+C/\sqrt{\log n}}[\Lambda_{n^\beta}\leftrightarrow\Lambda_n]\ge 1-\exp(-\sqrt{\log n})$, as needed, provided that $C=C(\beta,\delta)$ is chosen large enough. In order to conclude the proof of Proposition~\ref{lem:Sharp},
%It therefore suffices to show the lemma. 
\end{proof}

\begin{remark}
Proposition~\ref{lem:Sharp} can be directly obtained using Section~3 of \cite{DRT17} with the definition of the event $A_k$ being, for $0\le k\le \tfrac12n^\beta$, 
$$A_k=A_k(n):=\{\Lambda_{n^\beta/2+k}\longleftrightarrow\partial\Lambda_n\}.$$ 
Roughly speaking, since all the events $A_k$ have a probability larger than $1/\sqrt{\log n}$ at $p_n$, the argument in \cite{DRT17} implies that at every $p\ge p_n$, one of the event $A_k$ has a logarithmic derivative larger than $c\log n$ for some small constant $c$. A careful manipulation enables to prove that one of the events $A_k$ (and therefore $A_{n^\beta/2}$) must have probability larger than $1-e^{-\sqrt{\log n}}$ at $p_n+C/\sqrt{\log n}$. We believe that the present solution is simpler in the case of Bernoulli percolation and may have further applications, even though the other alternative does not use the Aizenman-Kesten-Newman estimate on the probability of the two-arm event.
\end{remark}
\section{The seedless renormalisation scheme}\label{sec:seedless}
\def\conda{\texorpdfstring{\protect\hyperlink{cond:a}{\bf (a)}}{(a)}\xspace}
\def\condb{\texorpdfstring{\protect\hyperlink{cond:b}{\bf (b)}}{(b)}\xspace}
\def\condc{\texorpdfstring{\protect\hyperlink{cond:c}{\bf (c)}}{(c)}\xspace}
The normalisation scheme we will work with uses four different scales, which we will denote by $k<K<n<N$. The most important is the scale between $K$ and $n$, where we will insert $1/\ep^2$ boxes of size $K$ and use the independence between these boxes to get to an event with high probability. The scales between $k$ and $K$; and between $n$ and $N$ will be used for gluing paths using Proposition \ref{prop:uniqueness} (the second one, between $n$ and $N$, is used only for resolving a technical issue of connecting to a specific facet and is less important). Here is the exact formulation which we will use.
\begin{theorem} \label{thm:quantitative GM} Fix $d\ge 3$. There exists a constant $C=C(d)>0$ such that the following holds. Assume that for some $p\in[0,1]$ and some $\ep>0$, there exist $1 \le k \le K\le n\le N<\infty$ such that $K\le \ep^2 n$ and 
  \begin{enumerate}
  \item[\hypertarget{cond:a}{\bf (a)}] $\Pp{0\lr\partial \Lambda_N}\ge \ep$,
  \item[\hypertarget{cond:b}{\bf (b)}] $\Pp{\Lambda_k \lr \partial\Lambda_N}\ge1- \exp(-\tfrac{1}{\ep})$,
  \item[\hypertarget{cond:c}{\bf (c)}] $\Pp[p]{A_2(k,K)} \le  \exp(-\tfrac{1}{\ep})$ and $\Pp[p]{A_2(n,N)} \le  \exp(-\tfrac{1}{\ep})$.% for every $q\ge p$.
  \end{enumerate} 
  Then
  \begin{equation*}
    \mathbb P_{p+C\ep}[0\lr[{\rm Slab}^d_{2N}\,]\infty]\ge \tfrac\ep2.
  \end{equation*}

\end{theorem}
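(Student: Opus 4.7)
\emph{Plan.} The proof is a block-renormalisation of ${\rm Slab}^d_{2N}$ onto $\Z^2$ at the macroscopic scale $2N$. For $z=(z_1,z_2)\in\Z^2$, set $B_z:=(2Nz_1,2Nz_2,0,\dots,0)+\Lambda_N$, giving a disjoint family of $N$-boxes sitting inside the slab. Declare $z$ \emph{good} if, inside $B_z$, the central $k$-box is connected to each of the four slab-faces $\{x_i=2Nz_i\pm N\}\cap B_z$ (for $i=1,2$), and if the uniqueness events $A_2(k,K)$ (over all translates relevant to the gluing step below) and $A_2(n,N)$ (at the scale of $B_z$) all fail. Assumption \condb combined with FKG applied to the four increasing face-reach events (and the symmetry of $\Lambda_N$ to equate faces), together with a union bound of size $O((n/K)^{d-1})$ over translated copies of $A_2(k,K)$ from \condc, yield $\mathbb P_p[z\text{ is good}]\ge 1-C\exp(-1/\ep)$. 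On this event, uniqueness forces the existence of a single large cluster $\mathcal C_z$ inside $B_z$ touching all four slab-faces; this cluster will play the role of the ``seed'' of the classical Grimmett--Marstrand scheme.

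\emph{Seedless gluing with sprinkling.} For neighbours $z,z'\in\Z^2$, let $F$ be their shared face. Partition $F$ into $M\ge(n/K)^{d-1}\ge\ep^{-2(d-1)}$ disjoint $K$-boxes $Q_1,\dots,Q_M$. A second-moment/symmetry computation, using \condc at scale $(k,K)$ to guarantee that every entry of $\mathcal C_z$ into a $K$-box is via a unique sub-cluster, shows that, conditionally on ``$z$ good'', the set of $Q_i$'s touched by $\mathcal C_z$ has cardinality at least $\ep^{-2}$ with probability $\ge 1-\exp(-c/\ep)$; the analogous statement holds for $\mathcal C_{z'}$. These two random sets depend on disjoint edges (inside $B_z$ vs inside $B_{z'}$), hence are independent, so with overwhelming probability they overlap in some $Q_\ell\subset F$. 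Inside such a $Q_\ell$ the two incoming sub-clusters are joined by sprinkling from $p$ to $p+C\ep$: revealing $\mathcal C_z$ and $\mathcal C_{z'}$ by an exploration started at their respective $k$-centres and stopped before the edges of $Q_\ell$ are queried leaves those edges genuinely independent, and a standard finite-energy argument (supported on \conda at scale $K$) produces an open connection between the two arrivals with probability $\ge 1-\exp(-c/\ep)$. Declare the renormalised edge $(z,z')$ good whenever this gluing succeeds.

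\emph{Renormalised percolation and conclusion.} Each renormalised vertex- and edge-goodness thus occurs with probability $\ge 1-\exp(-c/\ep)$ and depends only on edges in a bounded neighbourhood of the corresponding macroscopic object. The induced $1$-dependent site-bond percolation on $\Z^2$ therefore stochastically dominates a highly supercritical Bernoulli percolation (via Liggett--Schonmann--Stacey), whose infinite cluster lifts to an infinite open cluster of $\mathbb P_{p+C\ep}$ contained in ${\rm Slab}^d_{2N}$. Assumption \conda and the monotone coupling give that with probability $\ge\ep$ the origin reaches $\partial\Lambda_N$, and on that event uniqueness inside $B_0$ forces $0$'s cluster to coincide with $\mathcal C_0$, which lies in the renormalised infinite cluster. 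The residual $\exp(-c/\ep)$ losses absorb into the constant, yielding $\mathbb P_{p+C\ep}[0\lr[{\rm Slab}^d_{2N}]\infty]\ge\ep/2$.

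\emph{Main obstacle.} The crux is the interplay of exploration and sprinkling used for the gluing inside $Q_\ell$: one must reveal the two touching-sets on either side of $F$ without querying any edge of $Q_\ell$, so that the $C\ep$ worth of extra randomness is genuinely fresh and amenable to a finite-energy/second-moment analysis. The hypothesis $K\le\ep^2n$ enters crucially here, providing enough disjoint $K$-boxes on $F$ for the independent touching-sets to be forced to intersect. Simultaneously maintaining $1$-dependence of the renormalised goodness events across all of $\Z^2$, so that Liggett--Schonmann--Stacey applies, is the main bookkeeping burden.
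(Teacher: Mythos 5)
Your scheme breaks down exactly at the face-gluing step, and the two claims you lean on there are not consequences of (a)--(c). First, nothing in the hypotheses gives a lower bound on the number of $K$-boxes of the shared face touched by the unique crossing cluster $\mathcal C_z$: the cluster is only known to reach the face somewhere, and it may perfectly well do so through a single $K$-box, so the assertion that it touches $\ge\ep^{-2}$ of the $Q_i$ with probability $1-\exp(-c/\ep)$ is unsupported (a second-moment/Paley--Zygmund argument can at best produce a \emph{positive} probability, never $1-\exp(-c/\ep)$, and the conditional law given ``$z$ good'' is not controlled). You are also misreading where $K\le\ep^2 n$ enters: a face of side $2N$ always contains far more than $\ep^{-2}$ disjoint $K$-boxes; the hypothesis is needed to place $\approx\ep^{-2}$ disjoint $K$-boxes \emph{centred at points of a connected set of diameter $n$ in the bulk}, so that condition (a) provides $\ep^{-2}$ independent trials, each succeeding with probability $\ge\ep/2$. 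Second, even granting a common box $Q_\ell$, joining the two incoming clusters inside one $K$-box by a $C\ep$-sprinkling is not a ``standard finite-energy argument'': with a single, bounded meeting region the sprinkling succeeds with probability at most of constant order (indeed typically $O(\ep)$ per potential bridge), nowhere near the $1-\exp(-c/\ep)$ you need for the renormalised edge density to survive a Liggett--Schonmann--Stacey comparison. To get $1-\exp(-c/\ep)$ one needs on the order of $\ep^{-1}$ independent bridging opportunities, or, as in the paper, the knowledge that the \emph{unconditioned} connection probability is already $\ge 1-e^{-1/\ep}$, so that the number of closed boundary edges whose opening restores the connection is large with overwhelming probability (this is Lemma~\ref{sec:lemma-1}). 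Finally, the conditioning problem you yourself flag as the main obstacle is left unresolved: $Q_\ell$ is chosen after exploring which face boxes the clusters touch, and that exploration reveals edges incident to $Q_\ell$.

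The paper's proof avoids all of this by not attempting a static, one-shot block construction. Its key step is Lemma~\ref{lem:bprime}: \emph{any} connected set $S\ni 0$ of diameter $\ge n$ is connected to a prescribed quarter-face of $\Lambda_N$ with probability $\ge 1-2e^{-c/\ep}$, proved by placing $\approx\ep^{-2}$ disjoint $K$-boxes along $S$ (here $K\le\ep^2n$ is used), using (a) and $A_2(k,K)$ to get independent local successes of probability $\ge\ep/2$, and (b) (via the square-root trick) plus $A_2(n,N)$ to extend and glue. This is then fed into a \emph{sequential} exploration on $\Z^2$ in the style of Grimmett--Marstrand (Lemma~\ref{lem:criterionPercolation}), where each newly examined block receives its own fresh independent $\lambda\ep$-sprinkling and the negative information from the previously explored cluster (its closed edge boundary) is absorbed by Lemma~\ref{sec:lemma-1}; the already-built cluster itself plays the role of the seed. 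Any repair of your static scheme would essentially force you to prove an analogue of Lemma~\ref{lem:bprime} and of the conditional sprinkling lemma, i.e.\ to reconstruct the paper's argument.
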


Again, the reader who is interested only in the supercritical case may mentally replace ``some $p\in [0,1]$'' with ``$p_c+2\eps$'' as in the proof sketch in the introduction. But for the subcritical result we will apply it at $p+2\eps$ for a slightly subcritical $p$, and we do not know, eventually, if $p+2\eps$ is sub- or supercritical.
%\subsection{Strategy of the proof}

%% We prove that the assumptions \conda--\condb--\condc  guarantee that at $p+\ep$, the origin is connected to infinity inside the slab ${\rm Slab}^d_{2N}$ with probability larger than $c\ep$.

%% \def\condbp{\texorpdfstring{\protect\hyperlink{cond:bp}{\bf (b')}}{(b')}\xspace}
%% We will perform a renormalisation procedure similar to the one used in the work of Grimmett and Marstrand \cite{GriMar90}. In the proof of Grimmett and Marstrand, the renormalisation scheme uses a ``seeding'' procedure, where a seed is defined as a large box, all the edges of which are open. In their proof, an infinite cluster is created in a thick  slab by propagating a cluster from one seed to another. This uses the fact that any seed is connected to several other seeds in a box of size $N$, provided $N$ large enough. Since creating a seed has an exponential cost, this ``seeding'' procedure requires to take $N$ extremely large, which would provide us with even worse bounds if we make it quantitative: an earlier version of our arguments that used seeds had the bound
%% %$$\xi(p)<e^{\raisebox{-0.6pt}{$\scriptstyle e^{\raisebox{-0.4pt}{$\scriptscriptstyle e^{\raisebox{-0.4pt}{$\scriptscriptstyle e^{|p-p_c|^{-1}}$}}$}}$}}.$$
%% $$\xi(p)<e^{e^{e^{e^{|p-p_c|^{-1}}}}}.$$
%%
\def\condbp{\texorpdfstring{\protect\hyperlink{cond:bp}{\bf (b')}}{(b')}\xspace}
%As explained in the introduction, we need to modify the Grimmett-Marstrand argument to avoid the use of seeds. This motivates the condition \condbp below which will replace condition \condb. %In the following statement, the constant $c$ depends on $d$ only.
The proof is divided into two parts. In the first one, we prove the following intermediate statement.
\begin{lemma}\label{lem:bprime}
  % \item[\hypertarget{cond:bp}{\bf (b')}]
  Assume that conditions \conda, \condb and \condc hold. Then, there exists some $c>0$ (depending on $d$ only) such that for every connected set $S\ni0$ with a diameter larger than $n$, $$\Pp{S\lr F(N)}\ge1-2\exp[-c/\ep],$$ where $F(N):=\{(x_1,\ldots,x_d)\in\partial \Lambda_{N}\::\: x_1=N,x_2\ge 0,\ldots,x_d\ge 0\}.$
\end{lemma}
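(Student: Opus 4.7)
The plan is to combine a \emph{global} argument---identifying a unique large cluster $\calC^{**}\subset\Lambda_N$ that reaches $F(N)$---with a \emph{local} ``many independent trials'' argument along $S$ that attaches $S$ to $\calC^{**}$. For the global step, I would apply the FKG inequality to the $d\cdot 2^d$ decreasing events $\{\Lambda_k\not\lr F'\}$, where $F'$ ranges over the orthants of $\partial\Lambda_N$ obtained from $F(N)$ by axis reflections and permutations; the symmetry invariance of $\bbP_p$ then upgrades \condb to
\[ \Pp{\Lambda_k\lr F(N)}\ge 1-\exp(-c_1/\eps),\qquad c_1=1/(d\cdot 2^d). \]
Intersecting with $A_2^c(n,N)$ from \condc defines a good event $G$ of probability $\ge 1-2\exp(-c_1/\eps)$, on which there is a unique cluster $\calC^{**}$ in $\Lambda_N$ meeting both $\Lambda_n$ and $\partial\Lambda_N$, with $\calC^{**}\cap F(N)\neq\emptyset$. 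Thus on $G$ any $v\in\Lambda_n$ with $v\lr\partial\Lambda_N$ automatically satisfies $v\lr F(N)$.

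For the local step, since $S\ni 0$ is connected with diameter $>n$, it contains a nearest-neighbor path in $\Lambda_{n/4}$ along which $\|u\|_\infty$ rises from $0$ to $n/4$ in $\pm1$ steps; by an intermediate-value argument I can select $L\ge\lfloor 1/(8\eps^2)\rfloor$ vertices $w_1=0, w_2,\dots,w_L\in S\cap\Lambda_{n/4}$ at the levels $\|w_i\|_\infty=(i-1)(2K+1)$. The reverse triangle inequality $\|a-b\|_\infty\ge\bigl|\|a\|_\infty-\|b\|_\infty\bigr|$ then ensures pairwise $L^\infty$-separation $\ge 2K+1$, so the boxes $B_i:=w_i+\Lambda_K$ are pairwise disjoint. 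Setting
\[ V_i := \{w_i\lr w_i+\partial\Lambda_K\text{ in }B_i\}, \qquad U_i := A_2^c(k,K)\text{ translated to }w_i, \]
conditions \conda, \condc and translation invariance give $\Pp{V_i}\ge\eps$ and $\Pp{U_i}\ge 1-\exp(-1/\eps)$, hence $\Pp{V_i\cap U_i}\ge\eps/2$. Because these events depend only on edges in the disjoint $B_i$, they are mutually independent across $i$, yielding $\Pp{\bigcup_i(V_i\cap U_i)}\ge 1-\exp(-c_2/\eps)$. On $V_i\cap U_i$, local uniqueness from $U_i$ forces $w_i$ to lie in the unique cluster $\calD_i\subset B_i$ joining $w_i+\Lambda_k$ to $\partial B_i$.

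To link the local clusters to $\calC^{**}$, for each $w_i$ I would choose an axis $j^\star$ and sign $s=\mathrm{sign}((w_i)_{j^\star})$, and let $O_i$ be any orthant of $w_i+\partial\Lambda_N$ on the facet $\{y_{j^\star}-(w_i)_{j^\star}=sN\}$. Every $y\in O_i$ satisfies $|y_{j^\star}|=N+|(w_i)_{j^\star}|\ge N$, so $O_i\subset\Lambda_{N-1}^c$ whereas $w_i\in\Lambda_{n/4}\subset\Lambda_{N-1}$. FKG and symmetry around $w_i$ applied to the translated \condb give $\Pp{Y_i}\ge 1-\exp(-c_3/\eps)$ for $Y_i:=\{w_i+\Lambda_k\lr O_i\}$, and a union bound yields $\Pp{\bigcap_iY_i}\ge 1-L\exp(-c_3/\eps)\ge 1-\exp(-c_4/\eps)$. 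On $G\cap\bigcap_iY_i\cap\bigcup_i(V_i\cap U_i)$, which has probability at least $1-2\exp(-c/\eps)$ for some $c>0$, I would pick $i$ with $V_i\cap U_i\cap Y_i$: by $U_i$-uniqueness, $\calD_i$ contains both $w_i$ and a vertex of $w_i+\Lambda_k$ that, by $Y_i$, is globally connected to $O_i$, so concatenation gives $w_i\lr O_i$ globally, and this path must cross $\partial\Lambda_N$. Hence $w_i\lr\partial\Lambda_N$; on $G$ this forces $w_i\in\calC^{**}$ and therefore $w_i\lr F(N)$, whence $S\lr F(N)$.

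The main technical difficulty is the \emph{direction mismatch} in the linking step: the translated \condb only guarantees a path reaching $w_i+\partial\Lambda_N$---a box of radius $N$ around $w_i$, not around the origin. The adaptive choice of the orthant $O_i$ above, via the FKG/symmetry trick applied to the translated box, will turn this reach into a crossing of $\partial\Lambda_N$ at the cost of a smaller constant in the exponent, which is all that is needed here.
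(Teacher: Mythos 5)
Your proof is correct and takes essentially the same route as the paper's: the same $\sim 1/\eps^2$ disjoint $K$-boxes centred on points of $S$, the same local events (condition (a) plus translated $A_2(k,K)$-uniqueness, independent across the disjoint boxes), the same square-root-trick/quarter-face upgrade of condition (b) applied in the box translated to each point so as to force a crossing of $\partial\Lambda_N$, and the same use of $A_2(n,N)$ together with (b) to pass from $\partial\Lambda_N$ to the quarter-face $F(N)$. The only differences are cosmetic: you perform the $F(N)$-transfer at the start via a unique global crossing cluster rather than at the end, and you make explicit the intermediate-value selection of the points $w_i$ along a path in $S$, which the paper leaves implicit.
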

Here and below we call sets such as $F(N)$ ``quarter-faces'' even though this name is correct only in $d=3$.
\begin{remark}
The introduction of quarter-faces is a purely technical step and should not worry the reader. Indeed, the probability of connecting to a quarter-face is easily compared to the probability of connecting to the boundary of the box. To this end, divide $\partial\Lambda_N$ into $d 2^d $ quarter-faces $F_1,\ldots,F_{d2^d}$. Using the Harris-FKG inequality (sometimes called ``the square root trick'' when used in this way, see \cite[equation (11.14)]{Gri99a}) together with \condb, we find that
\begin{align}
  \label{eq:12}
  \bbP_p[\Lambda_k\lr[\Lambda_{N}] F(N)] \ge 1-\exp[-1/(\ep d2^d)].
\end{align}
\end{remark}
  The conclusion of Lemma \ref{lem:bprime} can be understood as a strengthening of the condition \condb where the box $\Lambda_{k}$ is replaced by arbitrary sufficiently large sets, and the boundary of $\Lambda_N$ is replaced by one of its quarter-faces. Using it, we will be able to construct an infinite cluster in ${\rm Slab}^d_{2N}$ by propagating it using local connections. Heuristically, if the cluster of the origin is connected to a large box $\Lambda$ away from $0$, then it must contain a large set, which is sufficient to propagate this cluster to other boxes neighbouring $\Lambda$. 
The condition on connectedness of arbitrary large sets was introduced in the work of Martineau and Tassion \cite{2013arXiv1312.1946M}, where it was established using abstract measurability arguments. The main contribution here is to make it quantitative.

%  \begin{proposition}
%    \label{prop:FC}
%    Let $p\in[0,1]$, there exists $C>0$ such that for every $\ep>0$ and $k\le K\le n \le N$ such that $K\le \ep n$ as well as \conda and \condbp hold,
%    \begin{equation}
%      \label{eq:13}
%      \mathbb P_{p+C\ep}[0\lr[{\rm Slab}^d_{2N}\,]\infty]\ge \ep/2.
%    \end{equation}
%  \end{proposition}
  
%  In the proof of Proposition~\ref{prop:FC}, we will use a renormalisation based on an exploration procedure of the cluster of the origin. At each step of the exploration, we will have some negative information coming from previously explored closed edges at the boundary of the cluster. This negative information constitutes a major complication, and this is why $p$ needs to be increased further, as in the original argument of Grimmett and Marstrand. %We therefore provide a technical lemma that will be important to propagate the explored cluster and keep good connection probabilities despite the negative information.

The proof of Theorem \ref{thm:quantitative GM} is now organised as follows. In \S~\ref{sec:proof-conditionbf-b}, we prove Lemma \ref{lem:bprime}. %This reduces the proof of the theorem to the proof of Proposition~\ref{prop:FC}.
The proof of the main theorem is then concluded in \S~\ref{sec:proofFC}. %Finally, Lemma~\ref{sec:lemma-1} is proved in Section~\ref{sec:9}.

\subsection{Connections to arbitrary sets}%Proof that conditions \conda, \condb and \condc imply \condbp}
\label{sec:proof-conditionbf-b}
In this section we prove Lemma \ref{lem:bprime}. Without loss of generality we may assume $\ep$ is sufficiently small (as otherwise by choosing $c$ sufficiently small the claim is trivially true). %and hence also that 
Below, the constants $c_i$ depend on $d$ only.

Let $p\in[0,1]$, $\ep>0$ and $k\le K\le n\le N$ be such that $K\le \ep^2 n$ and the three conditions \conda, \condb and \condc hold. Fix a connected set $S$ containing $0$ with a diameter at least $n$. Without loss of generality, we may assume $S\subset \Lambda_n$.

Consider a family of points $x_1,\dots,x_\ell\in S$ such that the boxes $Q''_i:=x_i+\Lambda_K$ are all disjoint and included in $\Lambda_n$. Also, introduce the smaller box $Q'_i:=x_i+\Lambda_k$. Note that we may choose 
$\ell\ge c_1/\ep^2$ such points, so let us fix $\ell=\lceil c_1/\eps^2\rceil$.
 
For every $i\in\{1,\dots,\ell\}$, define the two events
\begin{align*}E_i&:=\{x_i\longleftrightarrow \partial Q''_i\}\cap \{\exists\text{ unique cluster in $Q''_i$ from $Q'_i$ to $\partial Q''_i$}\},\\
  B_i&:=\{Q'_i\nlr \partial\Lambda_N \}.
\end{align*}
By translation invariance and conditions \conda and \condc,
\begin{align*}\Pp{E_i}&\ge \Pp{x_i\longleftrightarrow \partial Q''_i}-\Pp{A_2(k,K)}\\
  &\ge \ep-\exp(-1/\ep)\ge\ep/2.
\end{align*}
%Let $X$ be the number of indices $i$ for which $E_i$ does not occur.
Since the boxes $Q_i''$ are disjoint, the events $E_i$ are independent, and hence $$\Pp{\cup E_i}\ge 1-2e^{-c_2\ep\ell}\ge1-2e^{-c_3/\eps}.$$
where the second inequality is from our requirement that $\ell\ge c_1/\eps^2$.

Now, \condb implies that for every $i$,
\begin{equation}
\Pp{B_i} \le \exp[-1/(d2^d\ep)].
\end{equation}
Indeed, find a quarter-face $F$ of $x_i+\Lambda_N$ outside $\Lambda_{N-1}$ and then apply \eqref{eq:12} (with 0 shifted to $x_i$) to get
\[
\Pp{B_i}\le\Pp{Q_i'\nleftrightarrow F}
\stackrel{\smash{\textrm{\eqref{eq:12}}}}{\le}\exp[-1/(d2^d\ep)].
\]
By a union bound we have
\begin{align}
  \Pp{\cup B_i}&
  \le \ell\exp[-1/(d2^d\ep)]\le \exp(-c/\eps)%\le 4e^{-c_3/\ep}.\label{eq:14}
\end{align}
where the last inequality is by our assumption that $\ell\le c_1/\ep^2+1$ and that $\ep$ is sufficiently small.

Assume $E_i\setminus B_i$ occurred for some $i$. Then we know that $x_i\leftrightarrow\partial Q_i''$ (from the first part of $E_i$), that $Q_i'\leftrightarrow \partial\Lambda_N$ (from the negation of $B_i$) and that the two clusters performing these two connections are the same (from the second part of $E_i$). We get that there is a path from $x_i$ to $\partial\Lambda_N$, and in particular from $S$ to $\partial \Lambda_N$. This gives
\begin{align}\Pp{S\lr \partial\Lambda_N}&\ge \Pp{\exists i\textrm{ s.t.\ }E_i\setminus B_i}\\
&\ge \Pp{\cup E_i}-\Pp{\cup B_i}\\
&\ge 1-2e^{-c_4/\ep}.\label{eq:15}
\end{align}
It remains to replace the boundary of $\partial\Lambda_N$ in the equation above by the quarter-face $F(N)$. Yet, because we assumed $S\subset\Lambda_n$,
\begin{equation*}
  \mathbb P_p [S\lr[\Lambda_{N}] F(N)]\ge \mathbb P_p[\{\Lambda_n \lr F(N)\}\cap \{S\lr \partial\Lambda_N\}\cap A(n,N)^c]\ge 1-Ce^{-c_5/\ep}
\end{equation*}
thanks to \condb (again in the form \eqref{eq:12}) and \condc. This concludes the proof.\qed

\subsection{Renormalisation}%Proof of Proposition~\ref{prop:FC}}
\label{sec:proofFC}

%The proposition is proved by a renormalisation argument.
To prove Theorem \ref{thm:quantitative GM} we couple a growing exploration process on the slab with a growing exploration process  on a rescaled version of the square lattice. One will need a simple condition for a growing exploration process on $\mathbb Z^2$ to contain an infinite cluster. Therefore, before moving to the proof, we describe a particular type of exploration process on $\mathbb Z^2$ and give a sufficient condition for the existence of an infinite connected component.

Fix an arbitrary ordering of the edges of $\mathbb Z^2$. Let $\{0\}=A_0\subset A_1\subset A_2\ldots$ and $\emptyset=B_0\subset B_1\subset B_2\ldots$ be two growing sequences of subsets of $\mathbb Z^2$. We say that the sequence $X_t=(A_t,B_t)$ is an \emph{exploration sequence} if for every $t\ge 0$,
\begin{align}
&X_{t+1}=X_t\text{ if there is no edge connecting $A_t$ to $(A_t\cup B_t)^c$},\\
&X_{t+1}=(A_t\cup\{x_t\},B_t)\text{ or } X_{t+1}=(A_t,B_t\cup\{x_t\}) \text{ otherwise},\label{eq:16}
\end{align}
where $x_t$ is the endpoint in $(A_t\cup B_t)^c$ of the minimal edge connecting $A_t$ to $(A_t\cup B_t)^c$ (here and below, when we write minimal we mean with respect to the chosen ordering of the edges of $\mathbb Z^2$). A typical example of a random exploration sequence results from the exploration of the cluster of the origin in a site percolation process on $\mathbb Z^2$. In this case, the set $A_t$ corresponds to the open sites discovered after $t$ steps of exploration and $B_t$ is the discovered part of the (closed) boundary of the cluster.

We say that an exploration sequence  \emph{percolates} if the set $\cup_{t\ge 0}A_t$ is infinite. The following lemma, proved in \cite[Lemma 1]{GriMar90}, gives a sufficient condition for a random  exploration sequence  to percolate.
\begin{lemma}\label{lem:criterionPercolation}
  Let $p_c^{\mathrm{site}}$ be the critical parameter of Bernoulli site percolation on $\mathbb Z^2$. Let $X_t=(A_t,B_t)$ be a random exploration sequence and assume that there exists some $q>p_c^{\mathrm{site}}$ such that for every $t\ge 0$,
\begin{equation}
  \label{eq:17}
  \mathbb P\big(B_{t+1}= B_t  \:|\:X_0,\ldots,X_t\big)\ge q\text{ a.s.},
\end{equation}
then the process $X$ percolates with probability larger than a constant $c=c(q)>0$ that can be taken arbitrarily close to 1 provided that $q$ is close enough to 1.
\end{lemma}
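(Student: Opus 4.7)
The plan is to stochastically dominate the exploration from below by an i.i.d.\@ Bernoulli$(q)$ site percolation $\eta$ on $\mathbb Z^2$ so that the $\eta$-open cluster of the origin is contained in $\bigcup_t A_t$; the conclusion will then follow from the supercriticality $q > p_c^{\mathrm{site}}$.

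To build the coupling, I would enlarge the probability space with i.i.d.\@ uniforms $(V_v)_{v\in\mathbb Z^2}$ on $[0,1]$ and realise the exploration so that, at every non-stopping step $t$, writing $p_t := \mathbb P(B_{t+1}=B_t \mid X_0,\dots,X_t) \ge q$, the outcome $x_t\in A_{t+1}$ is equivalent to $V_{x_t}\le p_t$. Setting $\eta(v) := \mathbf 1_{V_v\le q}$ then defines an i.i.d.\@ Bernoulli$(q)$ field on $\mathbb Z^2$, and since $q\le p_t$ we get the key implication: whenever the exploration queries a vertex $v$ with $\eta(v)=1$, that vertex is placed into $A$ (equivalently, every $v$ which ever enters some $B_t$ satisfies $\eta(v)=0$).

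The core step is to establish $\mathcal C_\eta(0) \subseteq \bigcup_t A_t$, where $\mathcal C_\eta(0)$ is the $\eta$-open cluster of $0$. I would proceed by induction along any $\eta$-open path $0=v_0,v_1,\dots,v_k$: the base case is $v_0=0\in A_0$, and if $v_i\in A_{t_0}$ while $v_{i+1}$ were never queried, then the edge $\{v_i,v_{i+1}\}$ would remain indefinitely as a candidate edge from $A_s$ to $(A_s\cup B_s)^c$, so under a locally finite ordering of edges it must eventually be selected; at that moment $v_{i+1}$ is queried and, since $\eta(v_{i+1})=1$, is placed into $A$ by the previous observation. This yields
\[
\mathbb P(\text{exploration percolates}) \;\ge\; \mathbb P(|\mathcal C_\eta(0)|=\infty) \;=\; \theta^{\mathrm{site}}(q) \;>\; 0,
\]
and the fact that $c(q)\to 1$ as $q\to 1$ follows from $\theta^{\mathrm{site}}(q)\to 1$, easily verified by a Peierls-type estimate on the closed cutsets surrounding $0$.

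The main obstacle I anticipate is a mild technicality around the phrase "arbitrary ordering" in the set-up: the deterministic minimal-edge rule unambiguously queries every accessible vertex in finite time only when each edge has finitely many predecessors in the ordering. I would handle this either by assuming such a locally finite ordering from the outset, which costs nothing since the law of $\bigcup_t A_t$ depends on the ordering only through the deterministic selection of the next frontier vertex, or by re-indexing to a locally finite ordering.
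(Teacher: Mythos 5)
Your proof is correct, and it is essentially the argument the paper relies on: the paper does not prove this lemma itself but defers to \cite[Lemma 1]{GriMar90}, whose content is precisely the stochastic domination of such adapted exploration processes by i.i.d.\ Bernoulli($q$) site percolation, and your vertex-indexed uniform coupling plus the eventual-selection argument (valid once the edge ordering is an enumeration, i.e.\ each edge has finitely many predecessors, which is the intended reading of ``arbitrary ordering'') is the standard proof of that domination. The conclusion $c(q)\ge\theta^{\mathrm{site}}(q)>0$ with $\theta^{\mathrm{site}}(q)\to1$ as $q\to1$ by a Peierls estimate is exactly how the constant is meant to be obtained.
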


We now return to the proof of the theorem.  For every $x\in \mathbb Z^2$, set $ \Lambda_x=Nx+\Lambda_N\subset \mathbb Z^d$ and $\widetilde \Lambda_x=Nx+\Lambda_{2N}\subset\mathbb Z^d$, where for both we identify $x=(x_1,x_2)\in\mathbb Z^2$ with $(x_1,x_2,0,\dotsc,0)\in\mathbb Z^d$. We will identify $0$ with $(0,0)$ so $\Lambda_0$ is the box of size $N$ centered at $0$ in $\mathbb Z^d$.

Let $\omega$ be a Bernoulli percolation of parameter $p$ in ${\rm Slab}^d_{2N}$ and for every $x\in \Z^2$, let $\omega^x$ be a $\lambda\varepsilon$-percolation on $\widetilde \Lambda_x$, where $\lambda$ is some constant to be fixed later. We assume that  $\omega$ and the $\omega^x$'s  are independent of each other. We will prove that the origin is connected to infinity in
\begin{equation}
     \omega_{\mathrm{total}}:=\omega\vee \big(\vee_{x\in\Z^2} \omega^x\big)\label{eq:18}
\end{equation}
with a probability which is larger than $\ep/2$ (the notation $\vee$ stands for the maximum, or the union of the open edges if one prefers). This will 
   conclude the proof since $\omega_{\mathrm{total}}$ is stochastically dominated by a $(p+25\cdot \lambda\varepsilon)$-percolation --- each edge of the slab appears in at most $25$ boxes $\widetilde\Lambda_x$ (note that the number 25 does not depend on $d$ because $x$ is taken only in $\mathbb Z^2$). 

   To prove this claim, define an increasing sequence of percolation configurations $(\omega_t)_{t\ge0}$ in the slab, coupled with a random exploration sequence $X_t=(A_t,B_t)$ in $\mathbb Z^2$. Given a percolation configuration $\omega$ in the slab, let $\mathcal C(\omega)$ be the set of vertices that are connected inside $\mathbb Z^2\times\{-2N,\ldots,2N\}^{d-2}$ to $0$ by a path of $\omega$. 
   \begin{definition}
   Set $X_0=(A_0,B_0):=(\{0\},\emptyset)$ and $\omega_0=\omega$. For every $t\ge0$, let $\omega_{t+1}$ and $X_{t+1}$ be constructed from $\omega_{t}$ and $X_t$ as follows. If there is no edge connecting $A_t$ to $(A_t\cup B_t)^c$, define $X_{t+1}=X_t$. Otherwise, let $x=x_t$ be the extremity in $(A_t\cup B_t)^c$ of the minimal edge connecting $A_t$ to $(A_t\cup B_t)^c$ and define
   \begin{align}
     &\omega_{t+1}:=\omega_t\vee \omega^x,\\
     &X_{t+1}:=
     \begin{cases}
       (A_t\cup\{x\},B_t)&\text{if }0\lr[]\Lambda_x\text{ in }\omega_{t+1},\\
       (A_t,B_t\cup\{x\})&\text{otherwise}.
     \end{cases}
   \end{align}
   \end{definition}
\begin{remark}There is something unorthodox in the exploration process just defined, as we are not constraining the length of the paths that are created in each step. For example, it is possible that the path in $\mathbb Z^d$ that is reponsible to connect $0$ to $\Lambda_{(0,1)}$ goes much further than $N$. This will have no effect on the argument, but the reader who prefers a more ``geometric'' exploration is welcome to change the definition of $X_{t+1}$ to one where the path demonstrating $0\leftrightarrow\Lambda_x$ is constrained in some reasonable explored domain. The proof would go through equally well.
\end{remark}

\noindent Returning to the construction, we have the following two properties:
   \begin{itemize}
   \item[(i)] $\omega_\infty := \vee_{t\ge0}\: \omega_t\le \omega_{\mathrm{total}}$, 
   \item[(ii)] if $(X_t)$ percolates, then $0$ is connected to infinity in $\omega_\infty$.
   \end{itemize}
   We now wish to prove a third property which, when combined with the previous two and \conda, concludes the proof.
   \begin{itemize}   \item[(iii)] $\mathbb P[X\text{ percolates}\:|\:0\leftrightarrow\partial\Lambda_0\text{ in }\mathcal C(\omega_0)]\ge 1/2$.
   \end{itemize}
   
The proof relies on an application of Lemma~\ref{lem:criterionPercolation}. In order to apply this lemma, let us fix $q>p_c^{\mathrm{site}}(\mathbb Z^2)$ in such a way that $c(q)\ge 1/2$ and try to prove
   \begin{equation}
  \label{eq:19}
  \mathbb P\big(B_{t+1}=B_t \:|\:X_0,\ldots,X_t\big)\ge q\ \text{ a.s.}
\end{equation}
   Since $B_{t+1}=B_t$ as soon as there is no edge connecting $A_t$ to $(A_t\cup B_t)^c$, we can focus on the case where the minimal edge $e$ connecting $A_t$ to $(A_t\cup B_t)^c$ is well defined, and therefore its endpoint $x$ in $(A_t\cup B_t)^c$ also is. In this case, we have $B_{t+1}=B_t$ if $0$ is connected to $\Lambda_x$ in $\omega_{t+1}$. Since $X_0,\ldots,X_t$ and the event that $x$ is well defined are measurable with respect to $\mathcal C(\omega_0),\ldots,\mathcal C(\omega_t)$, it suffices to show that for any admissible $C_0,C_1,\ldots,C_t$, we have
\begin{equation*}
  \mathbb P\big(  \Lambda_0 \lr \Lambda_x\text{ in }\omega_{t+1}\:|\:  \mathcal C(\omega_0)=C_0,\ldots,\mathcal C(\omega_t)=C_t    \big)\ge q\ \text{ a.s.},
\end{equation*}
which would follow if we showed that for every admissible $C_t$,
\begin{equation}
  \label{eq:20}
  \mathbb P\big(  C_t \lr  \Lambda_x\text{ in }\omega_{t}\vee \omega^x \:|\:  \omega_t|_{\partial_E C_t}\equiv 0 \big)\ge q.
\end{equation}
\begin{floatingfigure}[r]{5.5cm}
\begin{centering}
\begin{picture}(0,0)%
\includegraphics{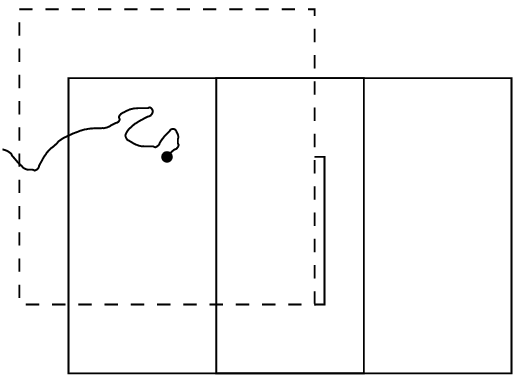}%
\end{picture}%
\setlength{\unitlength}{4144sp}%
\begingroup\makeatletter\ifx\SetFigFont\undefined%
\gdef\SetFigFont#1#2#3#4#5{%
  \reset@font\fontsize{#1}{#2pt}%
  \fontfamily{#3}\fontseries{#4}\fontshape{#5}%
  \selectfont}%
\fi\endgroup%
\begin{picture}(2351,1926)(137,-1210)
\put(811,-151){\makebox(0,0)[lb]{\smash{{\SetFigFont{12}{14.4}{\rmdefault}{\mddefault}{\updefault}{\color[rgb]{0,0,0}$y$}%
}}}}
\put(271,524){\makebox(0,0)[lb]{\smash{{\SetFigFont{12}{14.4}{\rmdefault}{\mddefault}{\updefault}{\color[rgb]{0,0,0}$y+\Lambda_N$}%
}}}}
\put(1621,-376){\makebox(0,0)[lb]{\smash{{\SetFigFont{12}{14.4}{\rmdefault}{\mddefault}{\updefault}{\color[rgb]{0,0,0}$F$}%
}}}}
\put(451,-1141){\makebox(0,0)[lb]{\smash{{\SetFigFont{12}{14.4}{\rmdefault}{\mddefault}{\updefault}{\color[rgb]{0,0,0}$\Lambda_{x'}$}%
}}}}
\put(2251,-1141){\makebox(0,0)[lb]{\smash{{\SetFigFont{12}{14.4}{\rmdefault}{\mddefault}{\updefault}{\color[rgb]{0,0,0}$\Lambda_x$}%
}}}}
\end{picture}%

\end{centering}
%\caption{$F$}\label{fig:F}
\end{floatingfigure}
\vspace{-0.65cm}
\noindent Now, observe that any admissible $C_t$ must intersect $\Lambda_{x'}$, where $x'$ is the endpoint of $e$ in $A_t$. Furthermore, the diameter of $C_t$ must be at least $N$ (here is where we use the conditioning over the event $0\lr\partial\Lambda_0$). Let $y$ be a vertex of $C_t\cap\Lambda_{x'}$. Since at least one of the quarter-faces of $y+\Lambda_{N}$ is included in $\Lambda_x$ (see an example in the figure on the right, where it is denoted by $F$; the reader is kindly requested to imagine the third dimension), Lemma \ref{lem:bprime} (applied after shifting 0 to $y$) implies
\begin{equation}
  \label{eq:22}
  \mathbb P_p[C_t\lr[\widetilde{\Lambda}_x] \Lambda_x\textrm{ in }\omega\vee\omega^x]\ge 1-2e^{-c/\ep}.
\end{equation}
Since the event $C_t\lr\Lambda_x$ is increasing, we may replace $\omega\vee\omega^x$ with $\omega_t\vee\omega^x$.
Since $\omega^x$ is independent of $\omega_{t}$, Lemma~\ref{sec:lemma-1} below shows that \eqref{eq:20} holds, provided the constant $\lambda$ is large enough. This concludes the proof of Item (iii) and therefore of Theorem \ref{thm:quantitative GM}. \qed

%\subsection{Proof of Lemma~\ref{sec:lemma-1}}\label{sec:9}

For the next (and last) lemma, it will be convenient to have a notation for the edge boundary restricted to a fixed set. Fix therefore a set $R\subset \Z^d$, and define
$$\Delta A=\big\{\{x,y\}\subset R\::\:|x-y|=1,\, x\in A,\, y\in R\setminus A\big\}.$$
% Below, we identify $(x,y)$ with the edge with endpoints $x$ and $y$.

\begin{lemma}\label{sec:lemma-1}
  For any $\delta,\eta>0$, there exists $\lambda>0$ such that
  for any $p\in[\delta,1-\delta]$ and $\ep>0$, as well as any $A,B\subset R$,
$\mathbb P_p[A\lr[R]B]\ge 1-2\exp(-\eta/\ep)$ implies that
$$\mathbb P\big[A\lr[R]B\text{ in }\omega\vee\tilde\omega\,\big|\,\omega(e)=0,\forall e\in\Delta A\big]\ge 1-\delta,$$
where $\omega$ is a Bernoulli percolation configuration satisfying $\mathbb P[\omega(e)=1]\ge p$ for every $e$, and $\tilde\omega$ a Bernoulli percolation of parameter $\lambda\ep$ which is independent of $\omega$.
\end{lemma}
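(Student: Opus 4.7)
My plan is to analyze the conditional law of $\omega\vee\tilde\omega$ given $\omega|_{\Delta A}\equiv 0$ and reduce the claim to a moment estimate for a single random variable. The independence of $\omega$ and $\tilde\omega$ together with the product structure of $\omega$ makes this conditional law a product of independent Bernoullis: each edge $e\in\Delta A$ is open with probability exactly $\lambda\ep$ (coming from $\tilde\omega$ alone, since $\omega(e)=0$), and each edge $e\in R\setminus\Delta A$ is open with some probability at least $p$. Since any path from $A$ to $B$ in $R$ must leave $A$ through some edge of $\Delta A$, conditioning further on $(\omega\vee\tilde\omega)|_{R\setminus\Delta A}$ turns the connection probability into $1-(1-\lambda\ep)^{|T|}$, where
\[
V:=\{y\in R\setminus A\::\: y\in B\text{ or }y\lr[R\setminus\Delta A] B\}\quad\text{and}\quad T:=\{\{x,y\}\in\Delta A\::\:y\in V\}.
\]
Coupling $\omega$ with a strict Bernoulli-$p$ configuration $\omega_p\le\omega$ and discarding the contribution of $\tilde\omega$ outside $\Delta A$ can only shrink $T$, so it is enough to show $\mathbb E_p[(1-\lambda\ep)^{|T|}]\le\delta$ with $T$ computed from $\omega_p|_{R\setminus\Delta A}$.

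The same decomposition applied to $\omega_p$ alone (with no sprinkling) gives the exact identity $\mathbb E_p[(1-p)^{|T|}]=\mathbb P_p[A\nlr B]\le 2e^{-\eta/\ep}$, the last inequality being the hypothesis of the lemma. A truncation at some threshold $t_0>0$ combined with Markov's inequality then yields
\[
\mathbb E_p[(1-\lambda\ep)^{|T|}]\le\mathbb P_p[|T|<t_0]+(1-\lambda\ep)^{t_0}\le\frac{\mathbb E_p[(1-p)^{|T|}]}{(1-p)^{t_0}}+e^{-\lambda\ep t_0}\le 2e^{-\eta/\ep}(1-p)^{-t_0}+e^{-\lambda\ep t_0}.
\]
Setting $t_0:=\eta/(2\ep|\log(1-p)|)$ collapses the first summand to $2e^{-\eta/(2\ep)}$ and bounds the second by $\exp(-\lambda\eta/(2|\log(1-p)|))$; using $|\log(1-p)|\le|\log\delta|$ (from $p\le 1-\delta$) and taking $\lambda=\lambda(\delta,\eta)$ large enough makes the second summand at most $\delta/2$.

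The tricky point I expect will be the moderate regime of $\ep$: the first summand above is only bounded by $\delta/2$ for $\ep$ smaller than some threshold $\ep_0(\delta,\eta)$. Outside that range the hypothesis is essentially vacuous ($2e^{-\eta/\ep}$ being of order one or larger), but on that same range the product $\lambda\ep$ is itself of order one; enlarging $\lambda$ further so that $\lambda\ep\ge 1$ on $[\ep_0,\infty)$ makes $\tilde\omega$ identically $1$ on $\Delta A$ almost surely, which opens every edge of $\Delta A$ and renders the required connection automatic.
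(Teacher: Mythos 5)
Your argument is essentially the paper's. The paper introduces the same set (there called $W$: the edges of $\Delta A$ whose outer endpoint is joined to $B$ inside $R\setminus A$), notes that it is independent of the conditioning event, proves $\mathbb P_p[A\nlr[R] B]\ge (1-p)^{t-1}\,\mathbb P_p[|W|<t]$, and then bounds the conditional probability from below by $(1-(1-\lambda\ep)^t)\bigl(1-\mathbb P_p[A\nlr[R] B](1-p)^{1-t}\bigr)$, optimizing over $t$; your exact identity $\mathbb E_p[(1-p)^{|T|}]=\mathbb P_p[A\nlr[R] B]$ followed by Markov at the explicit threshold $t_0=\eta/(2\ep|\log(1-p)|)$ is the same computation, and your coupling $\omega_p\le\omega$ makes explicit the monotonicity the paper uses implicitly to pass from the inhomogeneous $\omega$ to $\mathbb P_p$.

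The only inaccurate point is your patch for moderate $\ep$. Making $\lambda\ep\ge1$ does open every edge of $\Delta A$, but that alone does not render $A\lr[R]B$ automatic: the outer endpoints of $\Delta A$ still need to reach $B$, and under the conditioning this happens only with probability $\mathbb P[T\ne\emptyset]$, which in that regime need not exceed $1-\delta$. What does work is that $\tilde\omega$ has parameter (capped at) $1$ on \emph{all} edges of $R$, so the whole of $R$ becomes open; then, provided the hypothesis is non-vacuous (i.e.\ $2e^{-\eta/\ep}<1$), it forces $\mathbb P_p[A\lr[R]B]>0$ and hence a deterministic connection in the fully open configuration. When $2e^{-\eta/\ep}\ge1$ the hypothesis is empty and the conclusion can genuinely fail (take $R$ disconnected with $A$ and $B$ in different components), so that regime must simply be excluded; note that the paper's own proof carries the same implicit restriction, since its lower bound never exceeds $1-e^{-\eta/\ep}$, and in the application $\ep$ is small, so this costs nothing.
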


\begin{proof}
If $A\cap B\ne \emptyset$, the result is obvious. We therefore assume $A\cap B=\emptyset$. Also, introduce the event $E$ that $\omega(e)=0\text{ for all }e\in \Delta A$ and the set $W$ defined by
$$W=\big \{\{x,y\}\in \Delta A\text{ and }y\lr[R\setminus A]B \text{ in $\omega$}\big\}.$$
Any path from $A$ to $B$ in $R$, open in $\omega$, must use at least one edge of $W$. Consequently, for any $t\in\N$, we have
$$
\mathbb P_p[A\nlr[R] B] \geq {(1-p)}^{t-1} \,\mathbb P_p[|W|< t].
$$
Then, using that $|W|\ge t$ is independent of the event $E$, we deduce that, still for an arbitrary $t$,
\begin{align*}\mathbb P[A\lr[R]B\text{ in }\omega\vee \tilde\omega\:|\:E]
  &\ge \mathbb P[\exists e\in W:\tilde\omega(e)=1,W\ge t\:|\:E]\\
 &\ge (1-(1-\lambda\ep)^t)\mathbb P[W\ge t]\\
 &\ge (1-(1-\lambda\ep)^t)\Big(1-\frac{\mathbb P_p[A\nlr[R] B]}{(1-p)^{t-1}}\Big)\\
 &\ge (1-(1-\lambda\ep)^t)\Big(1-\frac{\exp(-\eta/\ep)}{(1-p)^{t-1}}\Big).
 \end{align*}
Choosing $\lambda=\lambda(\delta,\eta)$ large enough, the result follows by optimizing on $t$.
\end{proof}

\section{Proofs of Theorems \ref{thm:characteristic-length} and \ref{thm:GM}}
Theorem \ref{thm:GM} follows immediately from what was already proved. Indeed, we use Theorem \ref{thm:quantitative GM} with 
\begin{align*}p&=p_n+\lambda/\sqrt{\log n},\\ 
\eps&=1/\sqrt{\log n},\\
(k,K,n,N)&=(n^{\alpha^3},n^{\alpha^2},n^\alpha,n),
\end{align*} 
where $\alpha$ is given by Proposition~\ref{prop:uniqueness}, and where $\lambda$ is some sufficiently large constant. By the definition of $\alpha$, condition \condc of Theorem \ref{thm:quantitative GM} is satisfied when $n$ is large enough. Condition \conda follows from Proposition \ref{lem:oneArm}, while Condition \condb follows from Proposition \ref{lem:Sharp}, if only $\lambda$ is sufficiently large (we use Proposition \ref{lem:Sharp} with $\beta=\alpha^3$).

The $p<p_c$ case of Theorem \ref{thm:characteristic-length} is identical. Given $p<p_c$ we define $n=\lfloor \xi_p\rfloor$ and then $p_n\le p$. Using Theorem \ref{thm:quantitative GM} in the same way and with the same parameters as above gives that at $p_n+C/\sqrt{\log n}$ we already have percolation in a slab, and in particular it is above $p_c$. Hence
\[
p_c\le p_n+\frac{C}{\sqrt{\log n}}\le p+\frac{C}{\sqrt{\log \xi_p}}
\]
which is identical to $\xi_p\le\exp(C(p_c-p)^{-2})$, as claimed.

For the case $p>p_c$ of Theorem \ref{thm:characteristic-length} we need to estimate the probability to percolate in a slab \emph{starting from the boundary of the slab}. It will be slightly more convenient to work in the ``other slab'', $\oslab_n=\{-n,\dotsc,n\}\times\mathbb Z^{d-1}$. Define
\[
\theta(p,n):=\PP_p[(-n,0,\dotsc,0)\lr[\oslab_n]\infty].
\]
To estimate $\theta(p,n)$ we use the fact that at $p_c$
\[
\sum_{x\in\partial\Lambda_n}\PP_{p_c}[0\lr[\Lambda_n]x]\ge c.
\]
(this is well-known, and in fact we already gave a proof of that while proving Proposition \ref{lem:oneArm}). By symmetry the same holds for the bottom face, i.e.
\[
\sum_{x\in{\{-n\}\times\{-n,\dotsc,n\}^{d-1}}}\PP_{p_c}[0\lr[\Lambda_n]x]\ge c.
\]
Hence for some $x$ on this face we have $\PP_{p_c}[0\lr[\Lambda_n]x]\ge cn^{1-d}$. By translation invariance we get for some $y\in \{0\}\times\{-n,\dotsc,n\}^{d-1}$ that
\[
\PP_{p_c}[(-n,0,\dotsc,0)\lr[\oslab_n]y]\ge cn^{1-d}.
\]
Let now $p>p_c$ and define $n$ such that $\PP_p[0\lr[\slab_n]\infty]\ge 1/2\sqrt{\log n}$. By Theorem \ref{thm:GM} we may take $n\le\exp(C(p-p_c)^{-2})$. We may certainly replace $\slab_n$ with $\oslab_n$, as it is larger. We get
\begin{align*}
  \theta(p,n)&\ge \PP_p[(-n,0,\dotsc,0)\lr[\oslab_n] y,y\lr[\oslab_n]\infty]\\
  &\ge \PP_p[(-n,0,\dotsc,0)\lr[\oslab_n] y]\PP_p[y\lr[\oslab_n]\infty]\ge cn^{1-d}\cdot\frac{1}{2\sqrt{\log n}}
\end{align*}where we used FKG, translation invariance and the fact that the event $(-n,0\dotsc,0)\lr y$ is monotone. By \cite[Theorem 5]{MR905331}, $\xi_p\le n/\theta(p,n)$ and Theorem \ref{thm:characteristic-length} is established.\qed

\section{On Proposition \ref{prop:uniqueness}}\label{sec:Cerf}
Proposition \ref{prop:uniqueness} was not stated in this generality in the paper of Cerf \cite{MR3395466} (in that paper, the polynomial upper bound is stated for $p=p_c$). Here we explain how the arguments of \cite{MR3395466} can be adapted  to get a bound which  is uniform in $0\le p\le 1$.

\begin{proof}[Proof of Proposition~\ref{prop:uniqueness}]
  It suffices to prove the estimate above for $p\in (\delta,1-\delta)$, for some fixed  $\delta>0$ small enough. Indeed if $p$ is close to $0$ or $1$, one can easily prove the bounds of the proposition using standard perturbative arguments. Now, using for example the inequality above Proposition 5.3 of \cite{MR3395466}, we see that there exists a constant $\kappa>0$ such that for every $\delta \le p\le 1-\delta$, and every $n\ge 1$
  \begin{equation}
    \label{eq:23}
    \mathbb P_p[A_2(0,n)]\le \frac{\kappa \log n}{\sqrt n}.
  \end{equation}
  We will prove that there exists $C\ge 1$ large enough such that, uniformly in $\delta \le p\le 1-\delta$, we have for every $n\ge 2$
  \begin{equation}
    \label{eq:24}
    \mathbb P_p[A_2(n,n^C)]\le \frac1n,
  \end{equation}
  which concludes the proof. In the proof of \cite{MR3395466}, the fact that $p=p_c$ was used in order to obtain a lower bound on the two-point function (see Lemma~6.1 in \cite{MR3395466}). One can replace the input coming from the hypothesis that $p=p_c$ by the following simple argument. Fix $n\ge 2$. Since $\mathbb P_p[A_2(n,n^C)]\le\mathbb P_p[\Lambda_n \lr \partial \Lambda_{2n}]$, we can assume that the probability that there exists an open path from $\Lambda_n$ to $\partial \Lambda_{2n}$ is larger than $1/n$. Therefore, by the union bound, there must exist a point $x$ at the boundary of $\Lambda_n$ that is connected to $x+\partial \Lambda_n$ with probability larger than $\frac1{n|\partial \Lambda_n|}$. Hence, by translation invariance, $0$ is connected to $\partial \Lambda_n$ with probability larger than $\frac1{n|\partial \Lambda_n|}$, and the union bound again implies that for every $m\le n$,
  \begin{equation}
    \label{eq:25}
    \sum_{y\in \partial \Lambda_m}\mathbb P_p[0\lr[\Lambda_m]y] \ge \frac 1{n|\partial\Lambda_n|}. 
  \end{equation}
  Using this estimate, one can repeat the argument of Lemma~6.1 in \cite{MR3395466} to show that there exists a constant $C>0$ (independent of $n$ and $p$) such that 
  \begin{equation}
    \label{eq:26}
    \forall x,y\in \Lambda_n\quad \mathbb P_p[x\lr[\Lambda_{2n}]y]\ge \frac1{n^C}. 
  \end{equation}
  Then one can conclude the proof using the estimate above together with Corollary 7.3 in \cite{MR3395466}.
\end{proof}

\section{A lower bound}\label{sec:lower}
In this section, we make a few remarks on lower bounds for the correlation length. We first note that \cite{AN84} shows that at $p<p_c$ the expected size of the cluster (``the susceptability'') is at least $1/(p_c-p)$, and this shows that $\xi_p\ge (p_c-p)^{-1/d}$. Newman \cite{N86} shows a lower bound also on the truncated susceptability for $p>p_c$ but he makes assumptions on the behaviour of critical percolation which are still unknown: we could not complete Newman's argument without assuming Conjecture \ref{conj:1}.

Here we will sketch a proof that $\xi_p\ge (p_c-p)^{-2/d+o(1)}$ in the case that $p<p_c$, leaving the more complicated case of $p>p_c$ for the future. Let $N>0$, and let $E$ be the event that there exists an easy-way crossing of the box $3N\times\dotsb\times 3N\times N$. By \cite[\S 5.1]{Kes82} there is a constant $c_1(d)$ such that if $\mathbb P_q[E]<c_1$ then $q<p_c$.

Let now $p<p_c$.
Standard arguments using supermultiplicativity (see \cite{MR905331}) show that for every $x$ with $|x|>N$ we have  $\mathbb P_p(0\leftrightarrow x)\le \exp(-N/\xi_p)$. Hence there exists $N\approx\xi_p\log\xi_p$ such that $\mathbb P_p[E]<\frac12 c_1(d)$. Now, it is well-known that for any boolean function $f$ the total influence $I(f)$ satisfies $I(f)\le\sqrt{n\textrm{var}f/p(1-p)}$. Defining $F(p)=\mathbb E_p[f]$ this gives, for $p$ bounded away from 0 and 1, $\sqrt{F}'\le C\sqrt{n}$. We apply this for $f$ being the indicator of the event $E$ and get
\begin{equation}\label{eq:deriv sqrt}
\frac{d}{dp}\sqrt{\mathbb P_p[E]}\le CN^{d/2}\le C(\xi_p\log\xi_p)^{d/2}.
\end{equation}
Hence at $q\coloneqq p+c_2(\xi_p\log\xi_p)^{-d/2}$ for some $c_2$ sufficiently small, we would have $\mathbb P_q[E]<c_1(d)$ and hence $q<p_c$. The claim follows.

\bibliographystyle{alpha}
\newcommand{\etalchar}[1]{$^{#1}$}

\end{document}